\documentclass[12pt]{amsart}
\linespread{1.1}
\hoffset=-6.5mm
\textwidth=14cm

\newcommand\C{{\mathbb C}}

\newtheorem {theo}{Theorem}
\newtheorem {coro}{Corollary}
\newtheorem {lemm}{Lemma}

\newtheorem {prop}{Proposition}

\def\oz1{d{\overline z}^1}
\def\oz2{d{\overline z}^2}
\def\oz3{d{\overline z}^3}

\def\oI{\overline I}

\def\oz{\overline z}

\def\oIq1{\oI_1\cdots\oI_{q-1}}
\def\oIq2{\oI_1\cdots\oI_{q-2}}

\def\vol{{\mbox{vol}}}


\usepackage{etoolbox}
\patchcmd{\section}{\scshape}{\bfseries}{}{}
\makeatletter
\renewcommand{\@secnumfont}{\bfseries}
\makeatother

\newcommand{\conj}[1]{\overline{#1}}
\newcommand{\xy}{x,\conj{y}}
\newcommand{\xx}{x,\conj{x}}
\newcommand{\yy}{y,\conj{y}}
\newcommand{\alphabeta}{\alpha\conj{\beta}}
\newcommand{\yx}{y,\conj{x}}
\newcommand{\Modulus}[1]{\left|#1\right|}

\newcommand{\integration}[4]{\int_{#2} #4 \, #3}
\newcommand{\norm}[1]{\| #1 \|_2}



\newcommand{\diff}{{\mathrm{d}}}
\newcommand{\canCoord}[1]{z}
\newcommand{\schFn}[1]{\Psi_{#1}}
\newcommand{\sect}{s}

\newcommand{\rad}{r}

\newcommand{\ptNorm}[2]{\|{#1(#2)}\|}

\newenvironment{nouppercase}{%
  \renewcommand{\uppercasenonmath}[1]{}}{}

\begin{document}
\title[~]
{Eventual positivity of Hermitian algebraic functions \\ and associated integral operators}
\author[~]
{Colin Tan and Wing-Keung To}

\address{Colin Tan, Department of Mathematics, National University of Singapore, Block
S17, 10 Lower Kent Ridge Road, Singapore 119076}
\email{colinwytan@gmail.com}
\address{Wing-Keung To,
Department of Mathematics, National University of Singapore, Block
S17, 10 Lower Kent Ridge Road, Singapore 119076}
\email{mattowk@nus.edu.sg}

\thanks{Wing-Keung To was partially supported by the research grant
R-146-000-142-112 from the National University of Singapore and
the Ministry of Education.
}

\keywords{
Hermitian algebraic functions, integral operators, positivity
}
\subjclass[2010]{
32L05, 32A26, 32H02
}
\begin{nouppercase}
\maketitle
\end{nouppercase}
\numberwithin{equation}{section}
\begin{abstract}
 Quillen proved that repeated multiplication of the standard sesquilinear form to a positive Hermitian bihomogeneous polynomial eventually results in a sum of Hermitian squares, which was the first Hermitian analogue of Hilbert's seventeenth problem in the nondegenerate case.  Later Catlin-D'Angelo generalized this positivstellensatz of Quillen to the case of Hermitian algebraic functions on holomorphic line bundles over compact complex manifolds by proving the eventual positivity of an associated integral operator.  The arguments of Catlin-D'Angelo, as well as that of a subsequent refinement by Varolin, involve subtle asymptotic estimates of the Bergman kernel.  In this article, we give an elementary and geometric proof of the eventual positivity of this integral operator, thereby yielding another proof of the corresponding positivstellensatz.
 \end{abstract}

\section{{\bf Introduction}}\label{introduction and Statement of Results}

\medskip
A central topic in real algebraic geometry is Hilbert's seventeenth problem of representing a nonnegative form on $\mathbb R^n$ as a sum of squares of rational functions.
An affirmative solution was first provided by Artin's positivstellsatz in 1927 \cite{Art27}.
Since then, related topics have continued to be widely studied from different viewpoints (cf. \cite{Qui68},
\cite{Rez95}, \cite{CLPR96}, \cite{CD97}, \cite{CD99}, \cite{TY06},
\cite{Var08}, \cite{Tan15} and the references therein).

\medskip
For the corresponding problem in the Hermitian case, Quillen \cite{Qui68} and Catlin-D'Angelo \cite{CD97} proved independently the following positivstellensatz: for any Hermitian bihomogeneous polynomial $f$ on $\mathbb C^n$ which is positive on $\mathbb C^n\setminus \{0\}$, there exists $\ell_o>0$ such that for any $\ell\geq\ell_o$,  there are homogeneous holomorphic polynomials $g_1,\cdots,g_N$ on $\mathbb C^n$ satisfying

\begin{equation}\label{polynomial}
\left(\sum_{i=1}^n |z_i|^2\right)^\ell\cdot f(z)=\sum_{j=1}^N |g_j(z)|^2,
\end{equation}
where $z=(z_1,\cdots,z_n)$.  Later, Catlin-D'Angelo generalized this positivstellensatz to the case of positive Hermitian algebraic functions on holomorphic line bundles over compact complex manifolds, and this was formulated as an isometric embedding theorem of the associated Hermitian metrics on the line bundles in \cite{CD99} (see Section \ref{Section 2} for the two equivalent definitions of Hermitian algebraic functions and Catlin-D'Angelo's result
stated as Theorem \ref{TheoremCD99}).
For holomorphic line bundles $L$ and $E$ over a compact complex manifold $X$ and  positive Hermitian algebraic functions $R$ and $P$ on $L$ and $E$ respectively such that
$R$ satisfies the strong global Cauchy-Schwarz (SGCS) condition, Catlin-D'Angelo obtained their positivstellensatz by proving that, if $m$ is a sufficiently large positive integer, then the associated integral operator ${\bf K}_{R^mP,\Omega}$ on  $H^0(X, L^m\otimes E)$ is positive (see (\ref{KRmP}) and Section \ref{Section 2} for the definitions of ${\bf K}_{R^mP,\Omega}$ and SGCS respectively).  Here $\Omega$ denotes the volume form on $X$ induced from $R$.  The arguments of Catlin-D'Angelo in \cite{CD99} depend on Catlin's result \cite{Cat99} about perturbations of the Bergman kernel on the unit disk bundle associated to a negative line bundle.  Later Varolin \cite{Var08} refined the result on the eventual positivity of ${\bf K}_{R^mP,\Omega}$ by using an  asymptotic expansion for the Bergman kernel for high powers of holomorphic line bundles obtained by Berman-Berndtsson-Sj\"ostrand \cite{BBS08}.

\medskip
In this article, we give an elementary and geometric proof of an asymptotic formula, which leads to the eventual positivity of the above integral operator.  We state our main result as follows:

 \begin{theo}\label{main-theorem}
Let $L $ and $E$ be  holomorphic line bundles over an $n$-dimensional compact complex manifold $X$.
Suppose $R$ and $P$ are positive Hermitian algebraic functions on $L$ and $E$ respectively,
    such that $R$ satisfies the strong global Cauchy-Schwarz condition.
Then there exists a constant $C>0$ such that for all $m\in \mathbb N$ and all $s\in H^0(X, L^m\otimes E)$, one has
\begin{equation}\label{maininequality}
\Modulus{  {\bf K}_{R^mP,\Omega}(s,s)    -\dfrac{\pi^n}{m^n}\norm{ s}^2}\leq \dfrac{C}{m^{n+1}}\norm{ s}^2.
\end{equation}
Here $\Omega$ denotes the volume form on $X$ induced from $R$, and $\norm{s}$ denotes the $L^2$ norm of $s$ with respect to $R^mP$ and $\Omega$.
\end{theo}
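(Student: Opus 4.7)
The plan is to prove \eqref{maininequality} by a local, semiclassical-style analysis showing that the kernel of $\mathbf K_{R^mP,\Omega}$ concentrates at the diagonal with Gaussian width $O(1/\sqrt m)$, and then extracting the leading asymptotic. The $\pi^n/m^n$ factor will ultimately arise from the elementary identity $\int_{\mathbb{C}^n}e^{-m|w|^2}\,dV(w)=\pi^n/m^n$; after localizing, the $R^m$ factor behaves precisely like such a Gaussian in rescaled coordinates, and the normalization by the induced volume form $\Omega$ ensures that the leading coefficient recovers $\|s\|^2$ rather than some other pointwise quantity.

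The first step is to use the strong global Cauchy--Schwarz condition to reduce to an arbitrarily small neighborhood $U$ of the diagonal in $X\times X$. By SGCS and the compactness of $X\times X\setminus U$, one obtains a strict uniform inequality $|R(x,\bar y)|^2\le(1-\delta)\,R(x,\bar x)\,R(y,\bar y)$ off $U$ for some $\delta>0$. The portion of the double integral defining $\mathbf K_{R^mP,\Omega}(s,s)$ supported off $U$ is then bounded by a constant times $(1-\delta)^{m/2}\|s\|^2$, which is $O(\eta^m)\|s\|^2$ for some $\eta\in(0,1)$ and is thus absorbed into the right-hand side of \eqref{maininequality}.

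Next, around each $x_0\in X$ I would choose holomorphic coordinates $z$ centered at $x_0$ together with local holomorphic frames for $L$ and $E$ adapted to $R$ and $P$. Since $R$ is a positive Hermitian algebraic function, a standard linear change of frame normalizes it to
\[
R(x,\bar y)=1+\langle z(x),\overline{z(y)}\rangle+O\bigl(|z(x)|^2|z(y)|^2\bigr),
\]
so that $R(x,\bar x)=1+|z|^2+O(|z|^4)$ and the induced volume form $\Omega$ coincides with Lebesgue measure at $x_0$ up to higher-order corrections. Performing the rescaling $z(y)=z(x)+w/\sqrt m$ converts $R^m(x,\bar y)$ into $e^{-|w|^2}\bigl(1+O(m^{-1/2})\bigr)$ in the $w$-variable on an $O(\sqrt m)$-ball. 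Taylor-expanding $P$, the volume form, and the local coefficient representative of $s$ in powers of $w/\sqrt m$, and integrating term by term, the leading Gaussian integration produces $\pi^n/m^n$ times the pointwise squared $R^mP$-norm of $s$ at $x_0$, which upon integration over $x_0$ yields $(\pi^n/m^n)\|s\|^2$.

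The main obstacle is to sharpen the error from $o(1/m^n)$ to the stated $O(1/m^{n+1})$. This demands pushing the expansion one order beyond leading and verifying that the putative $O(1/m^{n+1/2})$ correction vanishes, which should happen by parity: odd polynomial factors in $w$ integrate to zero against the Gaussian $e^{-|w|^2}$. The surviving $O(1/m^{n+1})$ remainders then need to be controlled uniformly in both $x_0\in X$ and $s\in H^0(X,L^m\otimes E)$. Uniformity in $x_0$ follows from the compactness of $X$, which provides uniform bounds on the Taylor-expansion remainders of $R$, $P$, and $\Omega$, as well as a uniform positive lower bound on the SGCS gap $\delta$; uniformity in $s$ is automatic because every estimate is stated as a pointwise multiplicative bound on the integrand relative to the $R^mP$-norm density of $s$, so each local contribution integrates to a constant multiple of $\|s\|^2$ with constant of size $O(1/m^{n+1})$.
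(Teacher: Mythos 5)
Your overall architecture matches the paper's (split into a near-diagonal and an off-diagonal region, bound the off-diagonal part using SGCS, and extract the leading term $\pi^n/m^n$ from a Gaussian-type integral in adapted coordinates), but two steps as written do not go through. First, the claim that after the rescaling $z(y)=z(x)+w/\sqrt m$ one has $R^m(x,\bar y)=e^{-|w|^2}\bigl(1+O(m^{-1/2})\bigr)$ \emph{on an $O(\sqrt m)$-ball in $w$} (i.e.\ on the full fixed neighborhood $U$) is false: the degree~$\ge 3$ terms in the expansion of $\log\Psi_R$ contribute a relative error of size $O\bigl(|w|^3/\sqrt m+|w|^4/m\bigr)$, which is unbounded when $|w|\sim\sqrt m$. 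One must work on an intermediate scale --- large compared with $\sqrt{\log m/m}$ so that the Gaussian tail outside is $O(m^{-n-1})$, yet small enough that $m$ times the Taylor remainder of $\Psi_R$ stays controlled; this is exactly why the paper takes the tube radius $r(m)$ in the window $\sqrt{2(n+1)\log m/m}\le r(m)\le r_9\,m^{-(n+2)/(2n+5)}$ rather than either a fixed radius or the naive $1/\sqrt m$ scale.

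The more serious gap is your treatment of $s$. You propose to Taylor-expand ``the local coefficient representative of $s$'' in powers of $w/\sqrt m$ and assert that uniformity in $s$ is ``automatic'' because all estimates are pointwise multiples of the $R^mP$-norm density of $s$. That is not true for this step: the Taylor remainder of the local representative of $s\in H^0(X,L^m\otimes E)$ involves derivatives of $s$ at $x_0$, and these are \emph{not} bounded by $\ptNorm{s}{x_0}$ or by $\norm{s}$ with constants independent of $m$ (peak sections oscillate on scale $1/\sqrt m$, so first derivatives are generically of size $\sqrt m\cdot\ptNorm{s}{x_0}$, and controlling them sharply is tantamount to the Bergman-kernel estimates this paper is designed to avoid). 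The paper circumvents this entirely by never expanding $s$: the factor $s(y)\overline{s(x)}/\bigl(R(y,\bar x)^mP(y,\bar x)\bigr)$ is holomorphic in $y$, and the approximating kernel $T^{(m)}_\sigma$ is built from truncations $\Psi_{R,\le4}$, $\Psi_{P,\le2}$, $\Omega_{\le2}$ in \emph{Bochner coordinates}, which have only quasi-diagonal terms ($|\alpha|=|\beta|$); Lemma \ref{meanValueProperty} then shows the $y$-integral reproduces the value at $y=x$ exactly, yielding $\ptNorm{s}{x}^2$ times an explicit radial integral. Your parity observation (odd terms kill the putative $m^{-n-1/2}$ correction) is a weak form of this quasi-diagonality, but by itself it neither removes the need to expand $s$ nor supplies the uniform control of the remainder; to repair the argument you should replace the Taylor expansion of $s$ by this mean-value/reproducing step, and use special (Bochner) coordinates in which the diastasis has no cubic term so that the approximant is genuinely quasi-diagonal.
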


 \medskip
In comparison to the approaches of Catlin-D'Angelo \cite{CD99} and Varolin \cite{Var08} which depend on estimates on Bergman kernels, our proof of the above asymptotic formula is relatively elementary, direct and geometric, and does not depend on estimates on Bergman kernel.  Roughly speaking, our approach is to consider separately the behaviour of the kernels of the integral operators in tubular neighborhoods of the diagonal of $X\times X$ as well as that in the complementary regions, and derive our desired estimates by constructing good approximants of the kernels of the integral operators in the tubular neighborhoods of carefully chosen radii.

\medskip
We remark that while the two different approaches of \cite{Var08} and the present article both lead to the same dominant term
(i.e. $\dfrac{\pi^n}{m^n}\norm{s}^2$) in the approximation of ${\bf K}_{R^mP,\Omega}(s,s)  $,
it is not clear which of the two approaches will lead to a better higher order asymptotics for the
approximation (such as a better estimate for the constant $C$ in (\ref{maininequality})).  This appears to be an interesting question worthy of further investigation.

\medskip
A Hermitian algebraic function $Q$ on a holomorphic line bundle $F$ over a compact complex manifold
$X$ is called {\it a maximal sum of Hermitian squares} (resp. {\it a sum of Hermitian squares}) if there exists a
basis (resp. finite subset) $\{s_0,s_1\cdots,s_N\}$ of
$ H^0(X, F)$ such that one has
$Q(x,\overline{x})=\sum_{i=0}^N s_i(x)\overline{s_i(x)}$ for all $x\in X$.
Note that in this case, if $Q$ is positive, then as in \cite[Theorem 3]{CD99}, the associated map $\phi: X\to \mathbb P^{N}$ given by $\phi(x) = [s_0(x),\cdots, s_N(x)]$ is holomorphic and it induces an isometry between the Hermitian holomorphic line bundles $(F^*, h_Q)$ and $
(\mathcal O_{\mathbb P^{N}}(-1), h_N)$, i.e. $\phi^*\mathcal O_{\mathbb P^{N}}(-1)=F^*$ and $ \phi^*h_N=h_Q$.  Here  $h_N$
denotes the Hermitian metric on the universal line bundle $\mathcal O_{\mathbb P^{N}}(-1)$ over $\mathbb P^{N}$ induced by the polynomial $\sum_{i=0}^N |z_i|^2$, and $h_Q$ denotes the Hermitian metric on $F^*$ induced from $Q$.
As is known in Catlin-D'Angelo \cite{CD99} and Varolin \cite{Var08}, the eventual positivity of the integral operator in Theorem \ref{main-theorem} leads to the following positivstellensatz:

\begin{coro}\label{Corollary 1}  Let $X$, $L$, $E$, $R$, $P$, $n$, $C$ be as in
Theorem \ref{main-theorem}.  Then for each integer $m>\dfrac{C}{\pi^n}$ (resp. $m\geq \dfrac{C}{\pi^n}$),
the Hermitian algebraic function $R^mP$ is a maximal sum of Hermitian squares (resp. a sum of Hermitian squares), and in particular, there exists some holomorphic map $\phi: X\to \mathbb P^{N}$ (with $N$ depending on $m$) such that
	$((L^m\otimes E)^*,h_{R^mP } )= (\phi^*\mathcal{O}_{\mathbb P^{N}}(-1), \phi^*h_N)$.
\end{coro}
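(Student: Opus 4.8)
The plan is to read the positivity of the operator ${\bf K}_{R^mP,\Omega}$ off the estimate (\ref{maininequality}) and then feed it into the standard dictionary, going back to \cite{CD99} and \cite{Var08}, between positivity of that operator and a Hermitian-square decomposition of $R^mP$. First I would note that $R^mP$ is itself a \emph{positive} Hermitian algebraic function on $L^m\otimes E$, since on each fibre its associated Hermitian form is the product of the positive forms attached to $R^m$ and to $P$; in particular $\norm{\cdot}$ is an honest norm on $H^0(X,L^m\otimes E)$, so $\norm{s}>0$ for every $s\neq 0$. Rearranging (\ref{maininequality}) then gives, for all $s\in H^0(X,L^m\otimes E)$,
\begin{equation*}
{\bf K}_{R^mP,\Omega}(s,s)\ \geq\ \left(\dfrac{\pi^n}{m^n}-\dfrac{C}{m^{n+1}}\right)\norm{s}^2\ =\ \dfrac{\pi^n m-C}{m^{n+1}}\,\norm{s}^2 .
\end{equation*}
Hence when $m>C/\pi^n$ the right-hand side is strictly positive for $s\neq 0$, so the Hermitian form ${\bf K}_{R^mP,\Omega}$ is positive definite; and when $m\geq C/\pi^n$ it is positive semidefinite.

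Next I would convert operator positivity into the square decomposition. Fix an $L^2$-orthonormal basis $s_0,\dots,s_N$ of $H^0(X,L^m\otimes E)$ with respect to $R^mP$ and $\Omega$. By the very definition of a Hermitian algebraic function on $L^m\otimes E$ (see the equivalent formulations in Section \ref{Section 2}; equivalently, by the K\"unneth formula for the global holomorphic sections of $\mathrm{pr}_1^*(L^m\otimes E)\otimes\overline{\mathrm{pr}_2^*(L^m\otimes E)}$ over $X\times X$), the polarization of $R^mP$ expands uniquely as $R^mP(x,\overline y)=\sum_{i,j}c_{i\overline j}\,s_i(x)\overline{s_j(y)}$, and the matrix $(c_{i\overline j})$ is Hermitian because $R^mP(x,\overline y)=\overline{R^mP(y,\overline x)}$. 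Unwinding the definition (\ref{KRmP}) of ${\bf K}_{R^mP,\Omega}$ shows that $(c_{i\overline j})$ coincides, up to transpose, with the matrix of ${\bf K}_{R^mP,\Omega}$ in this basis; this is the computation already carried out in \cite{CD99} and \cite{Var08}. Diagonalizing $(c_{i\overline j})=U^*\,\mathrm{diag}(d_0,\dots,d_N)\,U$ with real $d_k\geq 0$ and passing to the new basis $g_k$, one obtains $R^mP(x,\overline x)=\sum_k d_k\,|g_k(x)|^2$; thus positive semidefiniteness of $(c_{i\overline j})$ is equivalent to $R^mP$ being a sum of Hermitian squares, and positive definiteness (all $d_k>0$) to it being a \emph{maximal} sum of Hermitian squares. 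Combining with the first paragraph settles the cases $m>C/\pi^n$ and $m\geq C/\pi^n$ respectively.

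For the ``in particular'' clause, write $R^mP(x,\overline x)=\sum_{i=0}^{N}t_i(x)\overline{t_i(x)}$ with $t_i\in H^0(X,L^m\otimes E)$ (namely the $\sqrt{d_k}\,g_k$ above). Positivity of $R^mP$ forces the $t_i$ to have no common zero, so $\phi=[t_0:\cdots:t_N]\colon X\to\mathbb P^{N}$ is a well-defined holomorphic map, and by the remark recalled just before the Corollary (cf. \cite[Theorem 3]{CD99}) it satisfies $((L^m\otimes E)^*,h_{R^mP})=(\phi^*\mathcal O_{\mathbb P^{N}}(-1),\phi^*h_N)$. The only step that is not pure bookkeeping is the identification, via (\ref{KRmP}), of the matrix of the integral operator ${\bf K}_{R^mP,\Omega}$ with the coefficient matrix of $R^mP$; since this is exactly what is already established in \cite{CD99} and \cite{Var08}, I would simply cite it rather than reprove it, so that the Corollary becomes a formal consequence of Theorem \ref{main-theorem}.
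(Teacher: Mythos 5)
Your proposal is correct and is essentially the argument the paper intends: the paper itself only remarks that the deduction can be found in \cite{CD99}, and the two ingredients you use --- the equivalence between positive (semi)definiteness of ${\bf K}_{R^mP,\Omega}$ and $R^mP$ being a (maximal) sum of Hermitian squares, and the construction of $\phi$ from a square decomposition of a positive Hermitian algebraic function --- are exactly those already recorded in Section \ref{Section 2} and in the remark preceding the Corollary. Your rearrangement of \eqref{maininequality} into ${\bf K}_{R^mP,\Omega}(s,s)\geq \frac{\pi^n m-C}{m^{n+1}}\norm{s}^2$ is the only new step, and it is carried out correctly.
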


\medskip
The organization of this paper is as follows.
In Section \ref{Section 2}, we cover some background material and introduce some notations.
In Section \ref{Section 3}, we give the construction
of the approximants to the kernels of the integral operators being investigated.
In Sections \ref{Section 4}-\ref{Section 6}, we derive the desired estimates needed for the proof of Theorem \ref{main-theorem}.

\bigskip
\section{Notation and Background materials}\label{Section 2}

\medskip
In this section, we recall some background materials regarding Hermitian algebraic
functions on holomorphic line bundles, which are taken from \cite{CD99}, \cite{DV04} and \cite{Var08}.  As such, we will skip their proofs here and refer the reader to these references for their proofs.

\medskip
Let
$X$ be an $n$-dimensional compact complex manifold, and let $F$ be a holomorphic line bundle over $X$ with the corresponding projection map denoted by $\pi:F\to X$.    The dual holomorphic line bundle of $F$ is denoted by $F^*$.  The total space of $F^*$ (denoted by the same symbol) is a complex manifold, and the
complex conjugate manifold of $F^*$ (resp. $X$) is denoted by $\overline{ F^*}$ (resp. $\overline{X}$).  Following \cite{Var08}, a {\it Hermitian algebraic function on $F$} is a function $Q: F^*\times \overline{ F^*}\to\mathbb C$ such
that (i) $Q$ is holomorphic on $F^*\times \overline{ F^*}$, (ii)
$Q(v,\overline{w}) =\overline{ Q(w,\overline{v})}$ for all $v,w\in F^*$, and (iii) $Q(\lambda\cdot v,\overline{w}) =\lambda Q(v,\overline{w})$ for $\lambda\in\mathbb C$ and $v,w\in F^*$, where $\lambda\cdot v$ denotes scalar multiplication along fibers.
Next we consider the following alternative definition of $Q$.  Let $\rho_1:X\times \overline{X}\to X$ and $\rho_2:X\times\overline{X}\to \overline{X}$ denote the projection maps onto the first and second factor respectively, and consider the holomorphic line bundle $\rho_1^*F\otimes \rho_2^*\overline{F}$ over the complex manifold $X\times \overline{X}$, whose fiber at a point $(x,\overline{y})\in X\times \overline{X}$ is naturally isomorphic to $F_x\otimes \overline{F_y}$.  Here $F_x:=\pi^{-1}(x)$ denotes the fiber of $F$ at the point $x\in X$, etc.  Then with slight abuse of notation, $Q$ is alternatively defined as a global holomorphic section of $\rho_1^*F\otimes \rho_2^*\overline{F}$ (i.e., $Q\in H^0(X\otimes  \overline{X}, \rho_1^*F\otimes \rho_2^*\overline{F})$) satisfying the condition
$Q(x,\overline{y})=\overline{Q(y,\overline{x})}\in F_x\otimes\overline{F_y}$ for all $x,y
\in X$.
Under the above two alternative definitions, one has
\begin{equation}\label{equivalent}
Q(v,\overline{w})=\langle Q(x,\overline{y}), v\otimes\overline{w}\rangle \quad\text{for all }v\in F^*_x,~w\in F^*_y,~x,y\in X,
\end{equation}
where $\langle ~,~ \rangle$ denotes the pointwise pairing 
between $\rho_1^*F\otimes \rho_2^*\overline{F}$ and its dual line bundle induced from 
that between $F$ and $F^*$.  Note that there is no confusion on which definition of $Q$ is being used, 
as this is indicated by the object at which $Q$ evaluates.   We will often make our 
statement using only one of these two definitions, and leave the corresponding statement in terms 
of the other definition as an exercise to the reader.

\medskip
From the second definition above, one also sees that with respect to any basis $\{s^\alpha\}$ of $H^0(X, F)$,
there exists a corresponding Hermitian matrix $\big( C_{\alpha\overline{\beta}}\big)$ such that,
for all $x,y\in X$, one has
\begin{equation}\label{Pxy}
Q(x,y)=\sum_{\alpha,\beta}C_{\alpha\overline{\beta}}s^\alpha(x)\overline{s^\beta(y)}.
\end{equation}
 As mentioned in the last section, we say that $Q$ is {\it a sum of Hermitian squares} (resp. {\it a maximal sum of Hermitian squares}) if the Hermitian matrix
$\big(C_{\alpha\overline{\beta}}\big)$ with respect to one (and hence any) basis of $H^0(X, F)$ is positive semi-definite (resp. positive definite), or equivalently, there exists a finite subset (resp. a basis) $\{t^\alpha\}$ of $H^0(X, F)$ such that $Q(x,\overline{x})=\sum_\alpha t^\alpha(x)\overline{t^\alpha(x)}$ for all $x\in X$.
Also, the Hermitian algebraic function $Q$ is said to be {\it positive} if $Q(v,\overline{v})>0$ for all $0\neq v\in F^*$.  If $Q$ is positive, then $Q$ induces a Hermitian metric $h_Q$ on $F^*$ given by $h_Q(v,w)=Q(v,\overline{w})$ for $v,w\in F^*_x$, $x\in X$.  (In \cite{CD99}, such a Hermitian metric arising from a positive Hermitian algebraic function is called a {\it globalizable metric}.)  We recall that the curvature form $\Theta_{h_Q}$ of the Hermitian metric $h_Q$ is the $(1,1)$-form on $X$ given locally as follows:   On
any open subset $U$ of $X$ and for any local non-vanishing holomorphic section $s$ of $F^*\big|_U$, one has $\Theta_{h_Q}\big|_U=-\sqrt{-1}\partial\overline\partial\log h_Q(s,s)$.  Again we denote by $\pi:F^*\to X$ the projection map.  Following \cite{Var08} again (and with origin in \cite{CD99}), a positive Hermitian algebraic function $Q$ on $X$ is said to satisfy the {\it strong global Cauchy-Schwarz (SGCS) condition} if the following two conditions are satisfied:

\smallskip
(SGCS-1) $|Q(v,\overline{w})|^2<Q(v,v)Q(w,\overline w)$ for all non-zero $v,w\in F^*$ such that $\pi(v)\neq \pi(w)$.
(Note that one always has $|Q(v,\overline{w})|^2=Q(v,v)Q(w,\overline w)$ whenever $\pi(v)= \pi(w)$.)

\smallskip
(SGCS-2) The $(1,1)$-form $\Theta_{h_Q}$ on $X$ is negative definite.

\medskip
Let $\Omega$ be a smooth Hermitian volume form on $X$, and let $Q$ be a positive Hermitian algebraic function on $F$ as before.
We endow the vector space $H^{0}(X, F)$ with the $L^2$ Hermitian inner product  (induced from $Q$ and $\Omega$) given
as follows:  For $s,t\in H^{0}(X, F)$, one has
\begin{equation}\label{L2inner product}
(s,t):=\int_X \langle s,t\rangle_Q(x)\, \Omega(x),
  \quad\text{where }  \langle s,t\rangle_Q(x):= \dfrac{s(x)\overline{t(x)}}{Q(x,\overline x)}
\end{equation}
denotes the pointwise Hermitian pairing on $F$ dual to $(F^*,\,h_Q)$.
Note that the quotient in (\ref{L2inner product}) makes sense and is a scalar-valued function on $X$,
since both the numerator $s(x)\overline{t(x)}$ and the denominator $Q(x,\overline x)$ take values
in $F_x\otimes \overline{F_x}$.
For simplicity, we denote the associated $L^2$-norm (resp. pointwise norm) of $s$ by $\norm{s}$ (resp.
$\ptNorm{s}{x}$,~$x\in X$), i.e.,
\begin{equation}\label{pointwisenorm}\norm{s}=\sqrt{(s,s)}\quad\text{and}\quad
\ptNorm{s}{x}=\sqrt{ \langle s,s\rangle_Q(x)}.\end{equation}
Next one defines an integral operator ${\bf K}_{Q,\Omega}$ associated to $Q$ and $\Omega$ and acting (as a Hermitian bilinear form) on the vector space
$H^0(X, F)$ as follows:   For $s,t\in   H^0(X, F)$, we let
\begin{equation}\label{integraloperator}
{\bf K}_{Q,\Omega}(s,t):=\iint_{ X\times X}\dfrac{Q(x,\overline{y}){s(y)}\overline{t(x)}}{Q(x,\overline{x})Q(y,\overline{y})}\Omega(x)\Omega(y) .
\end{equation}
Note that as in (\ref{L2inner product}), the quotient in the integrand in (\ref{integraloperator}) makes sense as a scalar-valued function on $X\times X$.
With respect to an orthonormal basis $\{s^\alpha\}$ of $ H^0(X, F)$ for the Hermitian inner product in (\ref{L2inner product}), it is easy to see that the Hermitian matrix $\big(C_{\alpha\overline{\beta}}\big)$ associated to $Q$ as given in
(\ref{Pxy})  is simply given by $C_{\alpha\overline{\beta}}= {\bf K}_{Q,\Omega}(s^\alpha,s^\beta)$ for each $\alpha, \beta$.  It follows that $Q$ is a sum of Hermitian squares (resp. a maximal sum of Hermitian squares)
 if and only if the integral operator ${\bf K}_{Q,\Omega}$ is
positive semi-definite (resp. positive definite) in the sense that
${\bf K}_{Q,\Omega}(s,s)\geq 0$ (resp. ${\bf K}_{Q,\Omega}(s,s)>0$) for all $0\neq s\in  H^0(X, F)$.

\medskip
For a positive Hermitian algebraic function $Q$ on $X$, we define the {\it Cauchy-Schwarz function} $\schFn{Q}:X\times X\to \mathbb R$ associated to $Q$ given by
 \begin{equation}\label{PsiP}
 \schFn{Q}(x,y) := \frac{Q(\xy)Q(\yx)}{Q(\xx)Q(\yy)},\quad x,y\in X.
 \end{equation}
 From the positivity of $Q$ and as in (\ref{integraloperator}), one easily sees that $\schFn{Q}$ is a
 well-defined real-analytic function on $X\times X$.  Note also that $\Psi_Q(x,y)=\Psi_Q(y,x)$ for all $x,y\in X$.

\medskip
We remark that for two positive Hermitian algebraic functions $Q_1$ and $Q_2$ on two holomorphic line bundles $F_1$ and $F_2$ over $X$, the product $Q_1Q_2$ (obtained by taking pointwise multiplication (resp. tensor product) when the $Q_i$'s are taken as functions (resp. bundle-valued sections))  is a positive Hermitian algebraic function on $F_1\otimes F_2$.  Furthermore,
one easily sees that

\begin{equation}\label{PsiPP}
 \schFn{Q_1Q_2}(x,y)= \schFn{Q_1}(x,y)\cdot \schFn{Q_2}(x,y)\quad\text{for all }x,y\in X.
\end{equation}

\medskip
For the remainder of this section, we will fix two holomorphic line bundles $L $ and $E$ over $X$.  We also fix two positive Hermitian algebraic functions $R$ and $P$ on $L$ and $E$ respectively, such that $R$ satisfies the SGCS condition.  From (SGCS-2),  $X$ is endowed with a  K\"ahler form $\omega$ and an associated Hermitian volume form $\Omega$ given by
\begin{equation}\label{volumeform}\omega:=-\Theta_{h_R}\quad\text{and}\quad
\Omega:=\dfrac{\omega^n}{n!}.
\end{equation}

\medskip
We recall the following result of Catlin-D'Angelo:

\begin{theo}\label{TheoremCD99} \cite{CD99}
Let $L $ and $E$ be  holomorphic line bundles over an $n$-dimensional compact complex manifold $X$.
Suppose $R$ and $P$ are positive Hermitian algebraic functions on $L$ and $E$ respectively, such that $R$ satisfies the SGCS condition.
Then there exists $m_o\in \mathbb N$ such that for each integer
$m\geq m_o$, $R^mP$ is a maximal sum of Hermitian squares (on the line bundle $L^m\otimes E$); in particular, there exists some holomorphic map $\phi: X\to \mathbb P^{N}$ (with $N$ depending on $m$) such that
	$((L^m\otimes E)^*,h_{R^mP } )= (\phi^*\mathcal{O}_{\mathbb P^{N}}(-1), \phi^*h_N)$.
	Here $h_N$ is as in Corollary \ref{Corollary 1}.
	\end{theo}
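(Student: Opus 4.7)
The plan is to exploit the concentration of $\schFn{R}^m$ near the diagonal of $X\times X$. Since $\schFn{R^mP}=\schFn{R}^m\cdot\schFn{P}$, and (SGCS-1) together with compactness of $X$ yield estimates $\schFn{R}(x,y)\leq 1-cd(x,y)^2$ near the diagonal and $\schFn{R}(x,y)\leq 1-\delta_0$ at distance $\geq r_0$, the integrand decays rapidly off the diagonal. I first split $X\times X=T_m\cup (X\times X\setminus T_m)$ where $T_m=\{(x,y):d(x,y)<r_m\}$ with $r_m\sim m^{-1/2}\log m$. On the complement, $\schFn{R}(x,y)^m$ is bounded by $e^{-c\log^2 m}$, which after applying the crude bound $|s(y)\overline{s(x)}|\leq\tfrac12(\ptNorm{s}{x}^2+\ptNorm{s}{y}^2)$ reduces this region's contribution to a quantity absorbed into $C/m^{n+1}\cdot\norm{s}^2$.

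For the main term from $T_m$, I would cover $X$ by finitely many charts in which, at each basepoint $x\in X$, one chooses K\"ahler (Bochner) normal coordinates for $\omega=-\Theta_{h_R}$ together with a local holomorphic frame $e$ of $L^{-1}$ so that, writing $R=e^{\phi}$, one has $\phi(z,\overline{z})=|z|^2+O(|z|^4)$ and (by polarization) $\phi(0,\overline{z})=\phi(z,0)=0$. Taking $x=0$ and $y=z$, this gives
\begin{equation*}
\log\schFn{R}(x,y)=-\phi(z,\overline{z})=-|z|^2+O(|z|^4).
\end{equation*}
In the same coordinates, $\schFn{P}(x,y)=1+O(|z|^2)$ since $\log\schFn{P}$ vanishes to second order on the diagonal, $\Omega=(1+O(|z|^2))\,dV_{\mathrm{eucl}}$, and after trivializing $L^m\otimes E$, the sectional factor $\tfrac{R^mP(x,\overline y)\,s(y)\,\overline{s(x)}}{R^mP(x,\overline x)R^mP(y,\overline y)}$ equals $\ptNorm{s}{x}^2$ at $z=0$. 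The approximant is then obtained by replacing each smooth factor by its value at $x$ and retaining only the pure Gaussian $e^{-m|z|^2}$.

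After the substitution $z=u/\sqrt{m}$, the region $\{|z|<r_m\}$ becomes $\{|u|<\sqrt{m}\,r_m\}$ (exhausting $\mathbb{C}^n$), and the leading contribution is
\begin{equation*}
\frac{1}{m^n}\int_X\ptNorm{s}{x}^2\Omega(x)\cdot\int_{\mathbb{C}^n}e^{-|u|^2}\,dV(u)=\frac{\pi^n}{m^n}\norm{s}^2,
\end{equation*}
since $\int_{\mathbb{C}^n}e^{-|u|^2}\,dV=\pi^n$. The quartic remainder in $\log\schFn{R}$ becomes $m|z|^4=|u|^4/m$ after rescaling, giving an error of order $1/m$ relative to the main term, hence $1/m^{n+1}$ overall. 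The $O(|z|^2)$ corrections from $\schFn{P}$, from $\Omega$, and from Taylor-expanding $s$ (whose linear-in-$z$ terms vanish by parity against the Gaussian, leaving $O(|z|^2)$) yield the same order. The Bochner choice of coordinates is crucial: it eliminates would-be $O(|z|^3)$ terms in $\log\schFn{R}$ that would otherwise give only $O(m^{-n-1/2})$.

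The main obstacle is achieving these error bounds uniformly in $x\in X$ and in $s$. Uniformity in $x$ follows from compactness of $X$: the Taylor remainders of $R$, $P$, $\Omega$, the coordinate transitions between overlapping charts, and the local frames of $L^m\otimes E$ all have $C^k$-norms uniformly bounded (with bounds independent of $m$). Uniformity in $s$ comes from bounding $|s(y)\overline{s(x)}|$ in $T_m$ by $\tfrac12(\ptNorm{s}{x}^2+\ptNorm{s}{y}^2)$, so that integrating either variable first produces $\norm{s}^2$. Balancing the cutoff radius $r_m$ so that both the Taylor truncation error and the off-diagonal exponential tail fit under $C/m^{n+1}$ is the delicate technical point, and corresponds to the approximant construction carried out in Section 3.
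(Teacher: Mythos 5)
Your proposal follows essentially the route the paper itself takes: the theorem is deduced from the asymptotic formula for ${\bf K}_{R^mP,\Omega}(s,s)$ (Theorem \ref{main-theorem}), which is proved by splitting $X\times X$ into a shrinking tubular neighborhood of the diagonal of radius roughly $\sqrt{(\log m)/m}$ and its complement, using (SGCS-1) for the exponentially small off-diagonal tail and Bochner coordinates plus truncated Taylor expansions of $\Psi_R$, $\Psi_P$, $\Omega$ for the near-diagonal main term $\pi^n m^{-n}\norm{s}^2$. Two small remarks: the paper works with $(1-|z|^2)^m$ and the exact integrals of Lemma \ref{explicitSpherIntegral} rather than passing to the Gaussian, and you should also record the (standard, but needed) equivalence between positive-definiteness of ${\bf K}_{R^mP,\Omega}$ and $R^mP$ being a maximal sum of Hermitian squares, which is what actually converts the asymptotic estimate into the conclusion of the theorem.

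There is, however, one genuine gap: your treatment of the holomorphic factor $f(y)=s(y)\overline{s(x)}/\bigl(R(\yx)^mP(\yx)\bigr)$. You Taylor-expand $s$, kill the linear terms by parity, and declare the rest an $O(|z|^2)$ correction "of the same order." But the implied constant in that $O(|z|^2)$ involves second derivatives of $f$, and $f$ depends on $m$ through the factor $R(\yx)^{-m}$, whose modulus grows like $e^{m|z|^2/2}$ on the ball of radius $r\sim\sqrt{(\log m)/m}$; Cauchy estimates on a comparable ball then pick up factors that are polynomially large in $m$, and there is no obvious way to bound the remainder by $C\,m^{-n-1}\norm{s}^2$ uniformly in $s$. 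The paper's way out is Lemma \ref{meanValueProperty}: because the approximant $\lambda(x,\cdot)$ has only quasi-diagonal terms $z^\alpha\overline{z}^\beta$ with $|\alpha|=|\beta|$, and $f$ is holomorphic in $y$, the orthogonality $\int_{B(r)}z^{\alpha+\gamma}\overline{z}^\beta\,\diff V=0$ for $\alpha+\gamma\neq\beta$ forces \emph{every} nonconstant Taylor term of $f$ to integrate to zero, so $\int\lambda(x,y)f(y)\,\diff V = \ptNorm{s}{x}^2\int\lambda(x,y)\,\diff V$ exactly, with no error from $s$ at all. Your parity observation is the $|\gamma|=1$ case of this; you need the full statement, which is precisely why the approximant is built to be quasi-diagonal (and why the paper truncates at even total degree). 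With that substitution your argument closes.
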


We remark that  Catlin-D'Angelo obtained the above theorem by proving the positive-definiteness of the integral operators ${\bf K}_{R^mP,\Omega}$ for all sufficiently large $m$, where $\Omega$ is as in (\ref{volumeform}).

\bigskip
\section{ The integral operator and the approximant}\label{Section 3}

\medskip
In this section, we are going to construct approximants to the kernels of the integral operators in Theorem \ref{main-theorem}.
The Cauchy-Schwarz functions defined in (\ref{PsiP}) and the canonical coordinates (called Bochner coordinates in this article) associated to analytic K\"ahler metrics (as given in \cite{Boc47} and \cite{Cal53}) will play important roles in our construction.

\medskip
Throughout this section and as in Theorem \ref{main-theorem}, we let $L $ and $E$ be  holomorphic 
line bundles over an $n$-dimensional compact complex manifold $X$, and we let $R$ and $P$ be 
positive Hermitian algebraic functions on $L$ and $E$ respectively, such that $R$ satisfies the 
SGCS condition.   We recall from (\ref{volumeform}) the analytic K\"ahler form $\omega$ and the 
volume form $\Omega$ on $X$ induced from $R$.  Recall from Section \ref{Section 2} that for each $m\in \mathbb N$, $R^mP$ is a positive Hermitian algebraic function on the holomorphic line bundle $L^m \otimes E$, and one has an associated integral operator given by
\begin{equation}\label{KRmP}
{\bf K}_{R^mP,\Omega}(s,t) :=\iint_{ X\times X} \dfrac{R^m(\xy)P(\xy){s(y)}\overline{t(x)}}{R^m(\xx)P(\xx)R^m(\yy)P(\yy)}\Omega(x)\Omega(y)
\end{equation}
for $s,t\in   H^0(X, L^m \otimes E)$ (cf. (\ref{integraloperator})). We denote the diagonal of $X\times X$ by
\begin{equation} \label{diagonal}
D:=\{(x,x)\in X\times X\,\big|\, x\in X\}\cong X.
\end{equation}
Consider the real-analytic subvarieties of $X\times X$ given by
\begin{align}\label{ZR}
Z_R:&=\{(x,y)\in X\times X\,\big|\, R(\xy)=0\},\quad\text{and}\\
\label{ZP}
Z_P:&=\{(x,y)\in X\times X\,\big|\, P(\xy)=0\}.
\end{align}
From the positivity of $R$ and $P$, one easily sees that $D\cap(Z_R\cup Z_P)=\emptyset$.
Recall from \cite{Cal53}, p.{~}3, the {\it diastatic function} $\Phi$ associated to the analytic K\"ahler form $\omega$, which is defined on some open neighborhood of $D$ in $X\times X$.  In our present case where $\omega$ arises from $R$, $\Phi$ actually extends to a function on $(X\times X)\setminus Z_R$ given by
\begin{equation}\label{diastasis}
\Phi=-\log  \schFn{R}, \quad \text{where }\schFn{R}(x,y)=\frac{R(\xy)R(\yx)}{R(\xx)R(\yy)},\quad x,y\in X,
\end{equation}
is the Cauchy-Schwarz function associated to $R$ (cf. (\ref{PsiP})).  Later we will also need to consider
the Cauchy-Schwarz function $\schFn{P}$ associated to $P$ given by
\begin{equation}\label{PPsiP}
\schFn{P}(x,y)=\frac{P(\xy)P(\yx)}{P(\xx)P(\yy)},\quad x,y\in X.
\end{equation}
Recall also from \cite{Cal53}, p.14, and \cite{Boc47}, p. 181 that for any $x\in X$, there exists a canonical coordinate system $\hat z$ centered at $x$ given by an $n$-tuple of holomorphic
coordinate functions
 $z=(z_1,z_2,\cdots,z_n):B(x,r)\to\mathbb C^n$ such that $z(x)=0$ and the power series expansion of the diastatic function $\Phi(x,\cdot)$ takes the form
\begin{equation} \label{expanddiastasis}
\Phi(x,y)=|z(y)|^2+\sum_{|\alpha|,|\beta|\geq 2} \Phi_{\alpha\overline{\beta}}(\hat z)z(y)^\alpha\overline{z(y)^\beta},\quad
y\in B(x,r).
\end{equation}
Here $\alpha=(\alpha_1,\cdots,\alpha_n)$, $\beta=(\beta_1,\cdots,\beta_n)\in (\mathbb N\cup\{0\})^n$ are multi-indices, $|\alpha|=\alpha_1+\cdots+\alpha_n$,  $z^\alpha=z_1^{\alpha_1}z_2^{\alpha_2}\cdots z_n^{\alpha_n}$, $|z|^2=|z_1|^2+\cdots+|z_n|^2$, and the $ \Phi_{\alpha\overline{\beta}}(\hat z)$'s are Taylor coefficients of $\Phi$ at $x$ with respect to the coordinate system $\hat z$, etc; furthermore,
\begin{equation} \label{Bxr}
B(x,r):=\{y\in X\,\big|\, |z(y)|<r\}
\end{equation}
denotes the open coordinate ball (with respect to $\hat z$) centered at $x$ and of radius $r$.
Note that the right hand side of (\ref{expanddiastasis}) does not possess any monomial term in
$z^\alpha\overline{z^\beta}$ with $|\alpha|\leq 1$ or $|\beta|\leq1$, except when $|\alpha|=|\beta|=1$; in particular, it does not possess any monomial term of total degree $|\alpha|+|\beta|=3$.
For simplicity, any local coordinate system $\hat z$ (with associated coordinate functions $z$) satisfying (\ref{expanddiastasis})
will be called a {\it Bochner coordinate} at $x$.   It was shown in \cite{Cal53}, pp. 14-15, that
if $\hat z$ (with associated coordinate functions $z$) is a Bochner coordinate at $x$, then a
coordinate system $\hat z^\prime$ (with associated coordinate functions $z^\prime$) is a
Bochner coordinate at $x$ if and only if \begin{equation} \label{Uz}
z^\prime=Uz
\end{equation}
for some $U\in {\mathbb U}(n)$, where ${\mathbb U}(n)$ denotes the group of $n\times n$ unitary matrices.
In particular, the coordinate ball $B(x,r)$ in (\ref{Bxr}) is a well-defined open subset of $X$ independent of the choice of the Bochner coordinate $\hat z$ at $x$, and we will simply call it the {\it Bochner ball} centered at $x$ and of radius $r$.
Then one easily sees from the arguments in \cite{Boc47}, p. 181 on the
existence of Bochner coordinates that for any $x_o\in X$, any Bochner
coordinate $\hat z$ at $x_o$ (with associated coordinate functions $z:B(x_o,r)\to\mathbb C^n$),
there exist some $r^\prime$ satisfying $0<r^\prime<r$ and open subsets
$V\subset X$, $W \subset X\times X$,
such that $x_o\in V$,
\begin{equation} \label{W}
W=\bigcup_{x\in V}W_{x}, \quad\text{where } W_x:=\{x\}\times B(x,r^\prime)\cong B(x,r^\prime),
\end{equation}
and there exists a continuous function $\widetilde{z}:W\to \mathbb C^n$ such that
the restriction $\widetilde{z}\big|_{ W_x}:B(x, r^\prime)\to \mathbb C^n$ (under the identification in (\ref{W}))
gives a Bochner coordinate at $x$ for each $x \in V$, and $\widetilde{z}\big|_{ W_{x_o}}=z$ on $B(x_o,r^\prime)$; furthermore,
shrinking $r^\prime$ and $V$ if necessary, we may assume that for all $x\in V$ and all Bochner
coordinates $\hat z$ at $x$, the associated coordinate functions $z$ (which are necessarily of the form
$U\widetilde{z}\big|_{ W_x}$ for some $U\in\mathbb U(n)$ (cf. (\ref{Uz}))) are defined on $B(x,r^\prime)$.  Together with the compactness of $X$, it follows readily that there exists some constant $r_1>0$ such that for all $x\in X$ and all Bochner coordinates $\hat z$ at $x$, the associated coordinate functions are defined on $B(x,r_1)$.
Furthermore, the Bochner coordinates form a principal ${\mathbb U}(n)$-bundle
$p:\mathcal G\to X$ over $X$ such that for each $x\in X$ and $\hat z\in\mathcal G_x:= p^{-1}(x)$,
$\hat z$ is a Bochner coordinate at $x$ with associated coordinate functions $z:B(x,r_1)\to\mathbb C^n$,
and for each $U\in\mathbb U(n)$, $U\hat z$ is simply the Bochner coordinate at $x$ with associated
coordinate functions given by $Uz$.  Throughout this article, we will fix a choice of the constant $r_1$,
and same remark will
apply to the other constants $r_i$'s and $C_j$'s defined later, unless stated otherwise.

\medskip
Next we consider the power series expansions of the local expressions for $\schFn{R}$, $\schFn{P}$ and $\Omega$.  Take a point $x\in  X$ and a Bochner coordinate $\hat z$ at $x$ with associated holomorphic coordinate functions $z:B(x,r_1)\to\mathbb C^n$.  From (\ref{diastasis}) and upon exponentiating the negative of both sides of (\ref{expanddiastasis}), one easily sees that the power series expansion of $\schFn{R}$ with respect to $\hat z$ takes the form
\begin{equation}\label{expandPsiR}
\schFn{R}(x,y)=1-|z(y)|^2+\sum_{|\alpha|,|\beta|\geq 2}\Psi_{R,\alpha\overline{\beta}}(\hat z)z(y)^\alpha\overline{z(y)^\beta}, \quad y\in B(x,r_1).
\end{equation}
Here as in (\ref{expanddiastasis}), the $ \Psi_{R,\alpha\overline{\beta}}(\hat z)$'s are the Taylor coefficients of $\schFn{R}$ at
$x$ with respect to the coordinate system $\hat z$.  Next we write
\begin{equation}\label{omegay}
\omega=\dfrac{\sqrt{-1}}{2}\sum_{1\leq i,j\leq n}\omega_{i\overline{j}}(\hat z)\, \diff z_i\wedge \diff \overline{z_j}\quad\text{on } B(x,r_1).
\end{equation}
Here, for $1\le i,j\le n$, $\omega_{i\overline{j}}(\hat z)$ denotes the $(i,j)$-th component of $\omega$ with respect to $\hat z$.
From (\ref{volumeform}) and (\ref{diastasis}), one has $\omega_{i\overline{j}}(\hat z)=\partial_{z_i}\partial_{\overline{z_j}}\Phi(x,\cdot)$.
Together with (\ref{expanddiastasis}), one easily sees that the power series expansion of $\omega_{i\overline{j}}(\hat z)$ takes the form
\begin{equation}\label{omegaij}
\omega_{i\overline{j}}(\hat z)(y)
=\delta_{ij}+\sum_{|\alpha|,|\beta|\geq 1}
\omega_{i\overline{j},\alpha\overline{\beta}}(\hat z)z(y)^\alpha\overline{z(y)^\beta}
, \quad y\in B(x,r_1).
\end{equation}
Here $\delta_{ij}$ denotes the Kronecker symbol, and as before, the 
$\omega_{i\overline{j},\alpha\overline{\beta}}(\hat z)$'s denote the Taylor coefficients of $\omega_{i\overline{j}}(\hat z)$ at $x$ with respect to $\hat z$.
Next we
 let  $\diff V(\hat z)$ be the Euclidean volume form on $B(x,r_1)$ with respect to the Bochner coordinate $\hat z$ at $x$
 given by
\begin{equation}\label{dVz}
\diff V(\hat z) = \left(\frac{\sqrt{-1}}{2}\right)^n \diff z_1\wedge   \diff \overline{z_1} \wedge \cdots \wedge \diff z_n\wedge   \diff \overline{z_n},
\end{equation}
so that we may write
\begin{equation}\label{OmegaOmegaz}
\Omega = \Omega(\hat z)\,\diff V(\hat z) \quad\text{on } B(x,r_1),
\end{equation}
where  $ \Omega(\hat z)$ is a positive real-analytic function.  In fact, one easily sees that $\Omega(\hat z)=\det(\omega_{i\overline{j}}(\hat z))$ on $ B(x,r_1)$.  Together with (\ref{omegaij}), one easily sees that the power series expansion of $\Omega(\hat z)$ with respect to $\hat z$ takes the form
\begin{equation}\label{expandOmegaz}
 \Omega(\hat z)(y)=1+\sum_{|\alpha|,|\beta|\geq 1}\Omega_{\alpha\overline{\beta}}(\hat z)z(y)^\alpha\overline{z(y)^\beta}
, \quad y\in B(x,r_1).
\end{equation}
Here as before, the $\Omega_{\alpha\overline{\beta}}(\hat z)$'s are the Taylor coefficients of
$\Omega(\hat z)$ at $x$ with respect to $\hat z$.  As for $\schFn{P}$, we consider its power series expansion with respect to $\hat z$, which is valid on $B(x,r_2)$ for some $r_2$ satisfying $0<r_2<r_1$, so that we have
\begin{equation}\label{expandPsiP1}
\schFn{P}(x,y)=\sum_{\alpha,\beta}\Psi_{P,\alpha\overline{\beta}}(\hat z)z(y)^\alpha\overline{z(y)^\beta}, \quad y\in B(x,r_2).
\end{equation}
By differentiating the expression of $\schFn{P}$ in (\ref{PPsiP}), one easily sees that $\Psi_{P,\alpha\overline{\beta}}(\hat z)=1$ when $|\alpha|=|\beta|=0$, and $\Psi_{P,\alpha\overline{\beta}}(\hat z)=0$ when exactly one of the two numbers $|\alpha|,\, |\beta|$ is $0$.  Thus, one may refine (\ref{expandPsiP1}) as follows:
\begin{equation}\label{expandPsiP}
\schFn{P}(x,y)=1+\sum_{|\alpha|,|\beta|\geq 1}\Psi_{P,\alpha\overline{\beta}}(\hat z)z(y)^\alpha\overline{z(y)^\beta}, \quad y\in B(x,r_2).
\end{equation}
Furthermore, it is easy to see that shrinking $r_2$ if necessary, we may choose (and will choose) $r_2$ so that (\ref{expandPsiP}) holds for all $x\in X$ and all $\hat z\in  \mathcal G_x$.  Next we consider certain truncations of the power series expansions considered above.    Take $x\in  X$ and a Bochner coordinate $\hat z$ at $x$ with associated holomorphic coordinate functions $z:B(x,r_1)\to\mathbb C^n$ as before.  With notation as in (\ref{expandPsiR}), we define a function $\Psi_{R,\leq 4}(\hat z):B(x,r_1)\to \mathbb C$ given by
\begin{equation}\label{expandPsiR4}
\Psi_{R,\leq 4}(\hat z)(y):=1-|z(y)|^2+\sum_{|\alpha|=|\beta|=2}\Psi_{R,\alpha\overline{\beta}}(\hat z)z(y)^\alpha\overline{z(y)^\beta}, \quad y\in B(x,r_1).
\end{equation}
In other words, $\Psi_{R,\leq 4}(\hat z)$ is obtained by taking the sum of the monomial terms of total degree $|\alpha|+|\beta|\leq 4$ in the power series expansion of
$\schFn{R}(x,\cdot)$ at $x$ with respect to $\hat z$.  With notation as in (\ref{expandOmegaz}) and (\ref{expandPsiP}), we similarly define
\begin{align}\label{expandOmegaz2}
 \Omega_{\leq 2}(\hat z)(y)&:=1+\sum_{|\alpha|=|\beta|=1}\Omega_{\alpha\overline{\beta}}(\hat z)z(y)^\alpha\overline{z(y)^\beta}
, \quad y\in B(x,r_1);\\
\label{expandPsiP2}
\Psi_{P,\leq 2}(\hat z)(y)&:=1+\sum_{|\alpha|=|\beta|=1}\Psi_{P,\alpha\overline{\beta}}(\hat z)z(y)^\alpha\overline{z(y)^\beta}, \quad y\in B(x,r_2).
\end{align}

\begin{lemm}\label{sigma}
 There exists an open subset $X^\prime\subset X$ such that $\int_{X\setminus X^\prime}\Omega=0$ and the restriction
 $\mathcal G\big|_{X^\prime}$ admits a continuous section, i.e., there exists a continuous function $\sigma:X^\prime\to \mathcal G$ such that $\sigma(x)\in \mathcal G_x$ for all $x\in X^\prime$.
\end{lemm}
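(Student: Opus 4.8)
\emph{The plan} is to produce $X'$ as a finite disjoint union of open sets, over each of which the bundle $\mathcal G$ admits a continuous section, chosen so that the leftover set $X\setminus X'$ is covered by finitely many coordinate spheres and is therefore $\Omega$-null. No analytic input beyond the bundle structure of $\mathcal G$ recorded in Section \ref{Section 3} is needed.

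First I would use the local triviality of $p:\mathcal G\to X$. By the discussion around (\ref{W}), every point of $X$ has an open neighborhood $V$ over which the assignment $x\mapsto\widetilde z\big|_{W_x}$ is a continuous section of $\mathcal G\big|_V$. Since $X$ is compact, I can choose finitely many such open sets $U_1,\dots,U_k$ covering $X$, each carrying a continuous section $\tau_i$ of $\mathcal G\big|_{U_i}$, and (after shrinking, which does not destroy the sections) I may assume each $U_i$ lies in a holomorphic coordinate chart with coordinate functions $\zeta_i$. Shrinking once more, I would pick open coordinate balls $U_i':=\{y\in U_i:|\zeta_i(y)|<s_i\}$ with $\overline{U_i'}\subset U_i$ and with $\{U_i'\}_{i=1}^k$ still covering $X$.

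Then I would set
\[
A_i:=U_i'\setminus\bigcup_{j<i}\overline{U_j'}\quad(1\le i\le k),\qquad X':=\bigcup_{i=1}^k A_i .
\]
Each $A_i$ is open and the $A_i$ are pairwise disjoint, so $\sigma:X'\to\mathcal G$ defined by $\sigma\big|_{A_i}:=\tau_i\big|_{A_i}$ is a well-defined continuous section with $\sigma(x)\in\mathcal G_x$ for all $x\in X'$. To see that $X\setminus X'$ is $\Omega$-null, I would first check the inclusion $X\setminus X'\subset\bigcup_{i=1}^k\partial U_i'$: if $x\notin\bigcup_i\partial U_i'$, let $i$ be the least index with $x\in U_i'$; then for $j<i$ one has $x\notin U_j'$ and $x\notin\partial U_j'$, hence $x\notin\overline{U_j'}$, so $x\in A_i\subset X'$. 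Finally, in the chart $\zeta_i$ the set $\partial U_i'$ is the Euclidean sphere $\{|\zeta_i|=s_i\}$, a real hypersurface of dimension $2n-1$, which has $2n$-dimensional Lebesgue measure zero; since $\Omega$ has a smooth (hence locally bounded) density with respect to Lebesgue measure in this chart, $\int_{\partial U_i'}\Omega=0$, and therefore $\int_{X\setminus X'}\Omega\le\sum_{i=1}^k\int_{\partial U_i'}\Omega=0$, as required.

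The only point requiring care is the bookkeeping in this telescoping step — verifying simultaneously that $X'$ is open, that it carries a globally continuous section, and that its complement lands inside the union of the null sets $\partial U_i'$ — but this is elementary point-set topology once the local sections $\tau_i$ are in hand. (Alternatively, one could fix a smooth triangulation of $X$ and let $X'$ be the union of the interiors of the top-dimensional simplices: each such simplex is contractible, so $\mathcal G$ is trivial over it and admits a section, while $X\setminus X'$ is the $(2n-1)$-skeleton, which is again $\Omega$-null.)
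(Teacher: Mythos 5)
Your argument is essentially the paper's own: take a finite trivializing cover, disjointify it by the telescoping construction $A_i=U_i'\setminus\bigcup_{j<i}\overline{U_j'}$, glue the local sections on the resulting disjoint open pieces, and observe that the leftover set lies in the union of the boundaries. Your extra step of shrinking the cover to coordinate balls so that each boundary is a Euclidean sphere (hence manifestly $\Omega$-null) is a welcome precision on a point the paper leaves to the reader, but it does not change the method.
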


\begin{proof}  Since $X$ is compact, there exists a finite open cover $\{V_i \}_{1\leq i\leq N}$ of $X$ such that each $\mathcal G\big|_{V_i}$ is trivial, i.e., $\mathcal G\big|_{V_i}$ admits a continuous section $\sigma_i$ for each $i=1,\cdots, N$.  Now, we let $U_1:=V_1$, and for each $2\leq k\leq N$, we let $\displaystyle U_k:=V_k\setminus \cup_{1\leq i\leq k-1}\overline{V_i}$.  Finally, we let $\displaystyle X^\prime:= \cup_{1\leq i\leq N}{U_i}$, and let $\sigma:X^\prime\to
\mathcal G$ be given by $\sigma(x):=\sigma_i(x)$ for each $x\in U_i$, $i=1,\cdots, N$.  Then one easily checks that $X^\prime$ and $\sigma$ satisfy all the conditions in the lemma.
\end{proof}

For each $r$ satisfying $0<r<r_1$, we let
\begin{equation} \label{Wr}
W(r):=\{(x,y)\in X\times X\,\big|\, y\in B(x,r)\}
\end{equation}
(cf. (\ref{Bxr})), which is an open neighborhood of $D$ in $X\times X$.  For discussion in ensuing sections, we define a function $\rho:W(r_1)\to\mathbb R$ given by
\begin{equation} \label{rhoxy}
\rho(x,y):=|z(y)|, \quad (x,y)\in W(r_1),
\end{equation}
where $z=(z_1,\cdots,z_n)$ are the coordinate functions associated to a (and hence any) Bochner coordinate
$\hat z\in\mathcal G_x$  (cf. (\ref{Uz})).  It is easy to see that $\rho$ is continuous on $W(r_1)$, but in general, $\rho$ is not symmetric in $x$ and $y$, i.e., $\rho(x,y)\neq \rho(y,x)$.
Let $Z_R$ and $Z_P$ be as in (\ref{ZR}) and (\ref{ZP}).  Since $D\cap(Z_R\cup Z_P)=\emptyset$, it is easy to see that there exists
a constant $r_3$ satisfying $0<r_3<r_1$ such that
\begin{equation} \label{Wr2}
 W(r_3)\cap(Z_R\cup Z_P)=\emptyset.
 \end{equation}
We proceed to construct the approximant to the kernel associated to the integral operator in (\ref{KRmP}) (see (\ref{Tm}) below).
First we let
\begin{equation} \label{ro}
r_o:=\min\{r_1,r_2,r_3\}>0.
\end{equation}
From now on and as given in Lemma \ref{sigma}, we fix an open subset  $X^\prime\subset X$ such that
$\int_{X\setminus X^\prime}\Omega=0$ and a continuous section
$\sigma:X^\prime\to \mathcal G\big|_{X^\prime}$ so that $\sigma(x)\in\mathcal G_x$ for each $x\in X^\prime$.  For $r>0$, let
\begin{equation}\label{Wprime}
W^\prime(r):=(X^\prime \times X)\cap W(r)=\{(x,y)\in X^\prime\times X\,\big|\, y\in B(x,r)\}.
\end{equation}
For each $m\in \mathbb N$, we define a function
$T^{(m)}_\sigma: W^\prime(r_o)\to\mathbb C$ given by
\begin{equation}\label{Tm}
T^{(m)}_\sigma(x,y):=\dfrac{(\Psi_{R,\leq 4}(\sigma(x))(y))^m\cdot \Psi_{P,\leq 2}(\sigma(x))(y)\cdot  \Omega_{\leq 2}(\sigma(x))(y)}{ \Omega(\sigma(x))(y)}
\end{equation}
for  $(x,y)\in W^\prime(r_o)$
(cf. (\ref{OmegaOmegaz}), (\ref{expandPsiR4}), (\ref{expandOmegaz2}) and (\ref{expandPsiP2})).  We remark that it follows readily from the continuity of $\sigma$ and the associated continuous family of Bochner coordinates (cf. the construction of $\mathcal G$) that $T^{(m)}_\sigma$ is a continuous function on $W^\prime(r_o)$.
Note that for $(x,y)\in W(r_o)$, we have $\schFn{R}(\yx)\neq 0$ and $\schFn{P}(\yx)\neq 0$ (cf. (\ref{Wr2})), and thus we have
\begin{equation}\label{RmPss}
\dfrac{R^m(\xy)P(\xy)\sect(y)\conj{\sect(x)}}
{R^m(\xx)P(\xx)R^m(\yy)P(\yy)}=
\schFn{R}^m(x,y)\schFn{P}(x,y)\cdot\frac{\sect(y)\conj{\sect(x)}}{R^m(\yx)P(\yx)}
\end{equation}
for $(x,y)\in W(r_o)$
(cf. (\ref{diastasis}) and (\ref{PPsiP})).  Also, since $\int_{X\setminus X^\prime}\Omega=0$,
it follows that the value of the right hand side of (\ref{KRmP}) remains unchanged if we
replace the domain of integration there by $X^\prime\times X$ (in lieu of $X\times X$).
Together with (\ref{KRmP}) and (\ref{RmPss}), it follows that for each $m\in \mathbb{N}$, each $r > 0$
and each $s\in H^0(X, L^m\otimes E)$,
if $r < r_o$, then
\begin{equation}\label{IntegrationScheme}
  {\bf K}_{R^mP,\Omega}(s,s)    -\dfrac{\pi^n}{m^n} \norm{s}^2 = \mathrm{I} + \mathrm{II} + \mathrm{III},
\end{equation}
where
\begin{align}
\mathrm{I}\label{IIIIII}
   & := \iint_{(x,y)\in W^\prime(r)}\big(   \schFn{R}^m(x,y) \schFn{P}(x,y)- T^{(m)}_\sigma(x,y)\big)
    \frac{ \sect(y)\conj{\sect(x)}}{R^m(\yx)P(\yx)} \Omega(y)\Omega(x),\\
   \nonumber
\mathrm{II}& :=    \iint_{(x,y)\in W^\prime(r)}  T^{(m)}_\sigma(x,y)  \frac{ \sect(y)\conj{\sect(x)}}{R^m(\yx)P(\yx)} \Omega(y)\Omega(x)
                        - \frac{\pi^n}{m^n} \norm{\sect}^2,\\
\nonumber
\mathrm{III}
   & :=   \iint_{(x,y)\in( X^\prime\times X)\setminus W^\prime(r)}  \dfrac{R^m(\xy)P(\xy)\sect(y)\conj{\sect(x)}}
{R^m(\xx)P(\xx)R^m(\yy)P(\yy)}
             \Omega(y)\Omega(x).
\end{align}

\medskip
In the ensuing sections, we will make suitable choices of $r=r(m)$ for each $m\in\mathbb{N}$, which will allow us to obtain desired estimates for $\mathrm{I}$, $\mathrm{II}$ and $\mathrm{III}$.

\bigskip
\section{Estimation of I}\label{Section 4}

In this section, we are going to estimate the integral $\mathrm{I}$ in (\ref{IIIIII}).  First we have

\begin{lemm}\label{Taylorestimate}  There exist constants $C_1,\,C_2,\,C_3,\,C_4,\,C_5,\,C_6,\, r_4>0$ with $r_4<r_o$ such that one has
\begin{align}
\label{4.1}|\schFn{R}(x,y)-\Psi_{R,\leq 4}(\hat z)(y)|&\leq C_1\rho(x,y)^5,\\
\label{4.2}|\schFn{R}(x,y)-(1-\rho(x,y)^2)|&\leq C_2\rho(x,y)^4,\\
\label{4.3}|\schFn{P}(x,y)-\Psi_{P,\leq 2}(\hat z)(y)|&\leq C_3\rho(x,y)^3,\\
\label{4.4}|\schFn{P}(x,y)-1|&\leq C_4\rho(x,y)^2,\\
\label{4.5}|\Omega(\hat z)(y)-\Omega_{\leq 2}(\hat z)(y)|&\leq C_5\rho(x,y)^3,\quad\text{and}\\
\label{4.6}|\Omega(\hat z)(y)-1|&\leq C_6\rho(x,y)^2
\end{align}
 for all $(x,y)\in W(r_4)$ and all $\hat z\in\mathcal G_x$.  Here $\rho$ and $r_o$ is as in \eqref{rhoxy} and 
 \eqref{ro}.
\end{lemm}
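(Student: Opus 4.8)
The plan is to reduce all six inequalities to a single statement about tails of power series, made uniform through compactness. Choosing a Bochner coordinate $\hat z\in\mathcal G_x$ with coordinate functions $z$, each left-hand side of (\ref{4.1})--(\ref{4.6}) is exactly the power series of $\schFn{R}$, $\schFn{P}$ or $\Omega(\hat z)$ (as in (\ref{expandPsiR}), (\ref{expandPsiP}), (\ref{expandOmegaz})) with a low-order piece deleted. For instance, by (\ref{expandPsiR}) and (\ref{expandPsiR4}), and using $|z(y)^\alpha\overline{z(y)^\beta}|\leq\rho(x,y)^{|\alpha|+|\beta|}$ (cf. (\ref{rhoxy})),
\[
\bigl|\schFn{R}(x,y)-\Psi_{R,\leq 4}(\hat z)(y)\bigr|\;\leq\;\sum_{\substack{|\alpha|,|\beta|\geq 2\\ |\alpha|+|\beta|\geq 5}}\bigl|\Psi_{R,\alpha\overline{\beta}}(\hat z)\bigr|\,\rho(x,y)^{|\alpha|+|\beta|},
\]
where the sum starts at total degree $5$. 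The deleted pieces in (\ref{4.2}), in (\ref{4.3})--(\ref{4.4}), and in (\ref{4.5})--(\ref{4.6}) likewise leave tails starting at total degrees $4$; $3$ and $2$; and $3$ and $2$, respectively. Here one uses that (\ref{expandPsiR}) has no total-degree-$3$ monomial and that (\ref{expandPsiP}), (\ref{expandOmegaz}) contain no monomial with exactly one of $|\alpha|,|\beta|$ equal to $0$ -- this is precisely what fixes the exponents $5,4,3,2$ on the right of (\ref{4.1})--(\ref{4.6}). It remains to bound each tail by a constant times $\rho(x,y)^{d+1}$, with $d$ the degree of the deleted piece, \emph{uniformly} in $x$ and $\hat z$.

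For the uniformity, I would first note that $\rho(x,y)$ and the truncations $\Psi_{R,\leq 4}(\hat z)(y)$, $\Psi_{P,\leq 2}(\hat z)(y)$, $\Omega_{\leq 2}(\hat z)(y)$ do not actually depend on the choice of $\hat z\in\mathcal G_x$: two Bochner coordinates at $x$ differ by $z'=Uz$ with $U\in\mathbb U(n)$ (cf. (\ref{Uz})); the degree-$\leq 4$ (resp. degree-$\leq 2$) Taylor polynomial at $x$ of a fixed function of $y$, evaluated at $y$, is unaffected by this linear change of coordinates, and $|z(y)|$ is unaffected because $U$ is unitary. Hence it suffices to prove each estimate for $\hat z=\sigma_i(x)$, where $\{V_i\}_{1\le i\le N}$ is a finite open cover of $X$ by relatively compact sets, each carrying a \emph{real-analytic} local section $\sigma_i$ of $\mathcal G$ defined on a neighborhood of $\overline{V_i}$. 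Such real-analytic Bochner frames exist: the diastatic function $\Phi=-\log\schFn{R}$ is real-analytic near $D$, and Calabi's normalization producing a Bochner coordinate from a given holomorphic coordinate is, order by order, a polynomial operation on the (real-analytically varying) Taylor coefficients of $\Phi$.

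With such a frame fixed, $\schFn{R}$, $\schFn{P}$ and $\Omega(\sigma_i)$, read through $\sigma_i$, are real-analytic functions of $(x,z)$ on a neighborhood of $\overline{V_i}\times\{0\}$ ($\schFn{R}$ and $\schFn{P}$ being real-analytic on all of $X\times X$ by positivity of $R$ and $P$). By compactness of $\overline{V_i}$ there are $M_i>0$ and $s_i\in(0,r_o)$ such that every Taylor coefficient in $z$ is bounded in modulus by $M_i\,s_i^{-(|\alpha|+|\beta|)}$ for all $x\in\overline{V_i}$ (equivalently: double $z,\overline z$ to independent complex variables and invoke the Cauchy estimates on a fixed polydisc). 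Since the number of pairs $(\alpha,\beta)$ with $|\alpha|+|\beta|=k$ grows only polynomially in $k$, the tail above is, whenever $\rho(x,y)\leq r_4$ for any fixed $r_4<\min_i s_i$, dominated by a convergent geometric-type series and is $\leq\mathrm{const}\cdot\rho(x,y)^{d+1}$. Taking $r_4:=\frac12\min\{r_o,\min_i s_i\}$ and letting $C_1,\dots,C_6$ be the maxima over $i$ of the constants so obtained gives (\ref{4.1})--(\ref{4.6}) for all $(x,y)\in W(r_4)$ and all $\hat z\in\mathcal G_x$.

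The main obstacle is exactly this uniformity step -- obtaining the coefficient bounds $M_i\,s_i^{-(|\alpha|+|\beta|)}$ uniformly over the compact set $\overline{V_i}$, which relies on the real-analytic dependence of the Bochner frame on the base point. Everything else is Taylor's theorem combined with absolute convergence of the power series, together with the bookkeeping of which low-order monomials are absent; the latter is what separates the differing exponents $5,4,3,2$.
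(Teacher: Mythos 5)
Your proof is correct and follows essentially the same route as the paper's: localize over a finite trivializing cover of $\mathcal G$, polarize (double $z,\conj{z}$ into independent variables) to reduce to holomorphic functions, obtain uniform Taylor-coefficient bounds by Cauchy estimates plus compactness, and sum the tail using the absence of the relevant low-order monomials to get the exponents $5,4,3,2$. The only deviation is that you insist on \emph{real-analytic} local Bochner frames, which is more than you need -- the merely continuous frames already built into the construction of $\mathcal G$ in Section \ref{Section 3} give a continuous (hence, on a compact set, uniformly bounded) family of polarized holomorphic functions on a fixed polydisc, and that is all the Cauchy estimates require; your observation that the truncations and $\rho$ are $\mathbb U(n)$-invariant is a nice extra that cleanly disposes of the quantifier over $\hat z\in\mathcal G_x$.
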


\begin{proof}  To prove (\ref{4.1}), we take an arbitrary point $x_o\in X$, and
take an open subset $V$ of $X$ containing $x_o$ such that $\mathcal G\big|_V$ is trivial,
so that there exists a number $r_4^\prime>0$ and a continuous family of coordinate functions
$\{z_{\hat z}\}_{\hat z\in\mathcal G\big|_V}$ such that for each $x\in V$ and
each $\hat z\in\mathcal G_x$, $z_{\hat z} :B(x,r_4^\prime)\to\mathbb C^n$ are the
coordinate functions associated to $\hat z$.  Then $\schFn{R}$ gives rise to a continuous
family of real-analytic functions $\schFn{R}(\hat z):B(x,r_4^\prime)\to \mathbb R$ (and
given by a continuous family of power series expansions in the variables $z_{\hat z}$)
parametrized
by $\hat z\in \mathcal G\big|_{V}$ such that $\schFn{R}(\hat z)(y)=\schFn{R}(x,y)$  for
all $x\in V$, $y\in B(x,r_4^\prime)$ and $\hat z\in \mathcal G_x$.   By polarization, we obtain for each
$\schFn{R}(\hat z)$ a
holomorphic function  $\widetilde{\schFn{R}(\hat z)}$ on $B(x,r_4^\prime)\times \overline{B(x,r_4^\prime)} $
such that $\schFn{R}(\hat z)(y)=\widetilde{\schFn{R}(\hat z)}(y,\overline{y})$ for $y\in B(x,r_4^\prime)$.
Here  $ \overline{B(x,r_4^\prime)}$ denotes the complex conjugate manifold of $B(x,r_4^\prime)$.
Furthermore, it is clear that the $\widetilde{\schFn{R}(\hat z)}$'s form a continuous family of functions parametrized
by $\hat z\in \mathcal G\big|_{V}$.
Then it follows readily from standard theory for convergent power series of holomorphic functions (for the
$\widetilde{\schFn{R}(\hat z)}$'s)  that for some $C_1,r_4>0$ satisfying $0<r_4<r_4^\prime$,
(\ref{4.1}) holds for all $x\in V$, $y\in B(x,r_4)$ and 
$\hat z\in \mathcal G\big|_{V}$, upon shrinking $V$ if necessary.  Together with the compactness of $X$, it follows that (\ref{4.1}) holds
for all $(x,y)\in W(r_4)$ and all $\hat z\in\mathcal G_x$, upon shrinking $r_4$ and enlarging $C_1$ if necessary.
The proofs of (\ref{4.2}) to (\ref{4.6}) are the same as that of (\ref{4.1}), and thus they will be skipped.  We just remark that
the explicit expressions for the lower order terms in
(\ref{expandPsiR}), (\ref{expandOmegaz}), (\ref{expandPsiP}) are needed in the derivation of (\ref{4.2}), (\ref{4.4}) and (\ref{4.6}) respectively.
\end{proof}

Let $X^\prime$ and $\sigma:X^\prime\to \mathcal{G}$ be as chosen in Section \ref{Section 3}.
Let $m\in \mathbb{N}$,
	$r$ be a number satisfying $0 < r < r_o$,
	$s\in H^0(X, L^m\otimes E)$, and
	$\mathrm{I}$ be as in $\eqref{IIIIII}$.
Then one easily sees that
\begin{equation}\label{modulusI}
\Modulus{\mathrm{I}}
   \le \iint_{(x,y)\in W^\prime(r)} \Modulus{   \schFn{R}(x,y)^m \schFn{P}(x,y)- T^{(m)}_\sigma(x,y)}
 \cdot \Modulus{  \frac{ s(y)\overline{s(x)}}{R(\yx)^m P(\yx)} }\Omega(y)\Omega(x).
\end{equation}
By
\eqref{L2inner product}, \eqref{pointwisenorm} (with $Q$ there given by $R^mP$), \eqref{diastasis} and \eqref{PPsiP}
and using the identities $R(\yx) = \conj{R(\xy)}$, $P(\yx) = \conj{P(\xy)}$, one easily sees that
\begin{equation}
\Modulus{  \frac{ s(y)\overline{s(x)}}{R(\yx)^m P(\yx)} }^2=
\dfrac{\ptNorm{s}{x}^2\ptNorm{s}{y}^2}{ \schFn{R}(x,y)^m \schFn{P}(x,y) }.
\end{equation}
Together with \eqref{modulusI}, one has
\begin{multline} \label{tempEstOfONE}
\Modulus{\mathrm{I}}
   \le \iint_{(x,y)\in W^\prime(r)} \Modulus{ 1 -   \frac{T^{(m)}_\sigma(x,y)}{\schFn{R}(x,y)^m \schFn{P}(x,y)}}
                                                                                        \schFn{R}(x,y)^{\frac{m}{2}} \schFn{P}(x,y)^{\frac{1}{2}}
    \\ \cdot\ptNorm{s}{x}\cdot\ptNorm{s}{y}\,
  \Omega(y)\Omega(x).
\end{multline}
Next we consider a pointwise estimate for part of the integrand in \eqref{tempEstOfONE} as follows:
\begin{lemm} \label{PtwiseEstOfExprOne}
        There exist constants $C_7, \, r_5 > 0 $ with $ r_5 < r_4$ such that,
            for all $m \in \mathbb{N}$
            				and all $(x,y) \in W^\prime(\frac{r_5}{m^{\frac{1}{5}}})$,
 			one has
        \begin{equation} \label{PointwiseOfExprI}
        \Modulus{ 1 -   \frac{T^{(m)}_\sigma(x,y)}{\schFn{R}(x,y)^m \schFn{P}(x,y)}}
                \le C_7 (\rho(x,y)^3 + m\rho(x,y)^5).
        \end{equation}
Here $\rho(x,y)$ is as in \eqref{rhoxy}.
\end{lemm}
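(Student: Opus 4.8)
The plan is to factor the quantity inside the absolute value in \eqref{PointwiseOfExprI} as a product of three ratios, each close to $1$, and then to estimate each factor using Lemma \ref{Taylorestimate}. Fix $x\in X^\prime$, write $\hat z:=\sigma(x)\in\mathcal G_x$ and abbreviate $\rho:=\rho(x,y)$. Since $W(r_o)\cap(Z_R\cup Z_P)=\emptyset$ (cf. \eqref{Wr2}, \eqref{ro}) and $\Omega(\hat z)$ is a positive function, for $(x,y)\in W^\prime(r_o)$ the definition \eqref{Tm} together with \eqref{diastasis} and \eqref{PPsiP} gives
\[
\frac{T^{(m)}_\sigma(x,y)}{\schFn{R}(x,y)^m\,\schFn{P}(x,y)}
=A(x,y)^m\,B(x,y)\,C(x,y),
\]
where
\[
A:=\frac{\Psi_{R,\leq 4}(\hat z)(y)}{\schFn{R}(x,y)},\qquad
B:=\frac{\Psi_{P,\leq 2}(\hat z)(y)}{\schFn{P}(x,y)},\qquad
C:=\frac{\Omega_{\leq 2}(\hat z)(y)}{\Omega(\hat z)(y)}.
\]

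First I would choose $r_5$. By \eqref{4.2}, \eqref{4.4} and \eqref{4.6} there is $r_5\in(0,r_4)$ such that $\schFn{R}(x,y)\geq\frac12$, $\schFn{P}(x,y)\geq\frac12$ and $\Omega(\hat z)(y)\geq\frac12$ for all $(x,y)\in W(r_5)$ and all $\hat z\in\mathcal G_x$. Combining these lower bounds with \eqref{4.1}, \eqref{4.3} and \eqref{4.5} produces constants $C_1^\prime,C_3^\prime,C_5^\prime>0$ (depending only on $C_1,\dots,C_6$) with
\[
|A-1|\leq C_1^\prime\rho^5,\qquad |B-1|\leq C_3^\prime\rho^3,\qquad |C-1|\leq C_5^\prime\rho^3
\]
on $W(r_5)$. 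Shrinking $r_5$ once more, I may also assume $C_1^\prime r_5^5\leq\frac12$, so in particular $|A-1|\leq\frac12$ on $W(r_5)$.

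The crucial point — and the reason the statement is made on $W^\prime(r_5/m^{1/5})$ rather than on a fixed ball — is to control $A^m-1$ uniformly in $m$. For $(x,y)\in W^\prime(r_5/m^{1/5})$ one has $\rho<r_5/m^{1/5}$, hence $m\rho^5<r_5^5=:M_0$, a constant independent of $m$. Writing $a:=A-1$ and using $(1+|a|)^m\leq e^{m|a|}$ together with $e^t-1\leq te^t$ for $t\geq0$, we get
\[
|A^m-1|\leq(1+|a|)^m-1\leq m|a|\,e^{m|a|}\leq C_1^\prime e^{M_0}\,m\rho^5
\]
on $W^\prime(r_5/m^{1/5})$. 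Now set $u:=A^m-1$, $b:=B-1$, $c:=C-1$ and expand
\[
A^mBC-1=(1+u)(1+b)(1+c)-1=u+b+c+ub+uc+bc+ubc.
\]
Here $|u|\leq C_1^\prime e^{M_0}m\rho^5$, $|b|\leq C_3^\prime\rho^3$, $|c|\leq C_5^\prime\rho^3$, and each product term is $O(\rho^3)$: for instance $|ub|\leq C_1^\prime e^{M_0}C_3^\prime(m\rho^5)\rho^3\leq C_1^\prime e^{M_0}C_3^\prime M_0\,\rho^3$, while $|bc|,|ubc|\leq C\rho^6\leq Cr_5^3\rho^3$ since $\rho\leq r_5$. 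Collecting the bounds yields a constant $C_7>0$ with $|A^mBC-1|\leq C_7(\rho^3+m\rho^5)$ on $W^\prime(r_5/m^{1/5})$, which is exactly \eqref{PointwiseOfExprI}. Since the constants $C_1,\dots,C_6,r_4$ of Lemma \ref{Taylorestimate} are already uniform over the compact manifold $X$, the choices of $r_5$ and $C_7$ are global.

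I expect the only genuinely delicate step to be the uniform estimate of $A^m-1$: the discrepancy $A-1$ between $\schFn{R}$ and its degree-$4$ truncation $\Psi_{R,\leq 4}$ is only $O(\rho^5)$, so raising $A$ to the $m$-th power would be uncontrolled on a ball of fixed radius; taming this is precisely what forces the radius to shrink like $m^{-1/5}$, after which $m\rho^5$ is bounded by a constant and the remaining manipulations reduce to bookkeeping with the estimates of Lemma \ref{Taylorestimate}.
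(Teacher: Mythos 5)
Your proposal is correct and follows essentially the same route as the paper: factor the ratio into the three quotients (truncated vs.\ exact $\Psi_R$, $\Psi_P$, $\Omega$), bound each factor's deviation from $1$ via Lemma \ref{Taylorestimate}, and exploit the shrinking radius $r_5/m^{1/5}$ so that $m\rho^5$ stays bounded when the $\Psi_R$-factor is raised to the $m$-th power. The only difference is cosmetic bookkeeping — the paper controls $1-\mathrm{C}^m$ with the geometric-sum factorization and the bound $(1+C_{10}r_5^5/m)^m\le e^{C_{10}r_5^5}$, whereas you use $(1+|a|)^m-1\le m|a|e^{m|a|}$ — and both yield the same estimate.
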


\begin{proof}
Recall from \eqref{Tm} that, for $(x,y)\in W^{\prime}(r_o)$,
\begin{equation}
\frac{T^{(m)}_\sigma(x,y)}{\schFn{R}(x,y)^m \schFn{P}(x,y)} = \mathrm{A}(x,y) \cdot \mathrm{B}(x,y)
\cdot \mathrm{C}(x,y)^m,\quad \text{where}
\end{equation}
\begin{equation}
\begin{aligned}
\mathrm{A} (x,y) &:= \frac{\Psi_{P,\le 2} (\sigma(x))(y)}{\Psi_P(x,y)} , \\
\mathrm{B} (x,y) &:= \frac{\Omega_{\le 2}(\sigma(x))(y)} { \Omega(\hat{z})(y)} , \\
\mathrm{C} (x,y) &:= \frac{\Psi_{R,\le 4}(\sigma(x))(y)}{\Psi_R(x,y)} .
\end{aligned}
\end{equation}
Using the identity $1 - \mathrm{A}  \mathrm{B}  \mathrm{C}^m
= (1- \mathrm{A}) + \mathrm{A} (1-\mathrm{B}) + \mathrm{A} \mathrm{B} \left( 1- \mathrm{C}^m\right)$, one has
\begin{multline} \label{RephrasePointwiseOfExprOne}
\Modulus{1 - \frac{T^{(m)}_\sigma(x,y)}{\schFn{R}(x,y)^m \schFn{P}(x,y)}}
\le\Modulus{1 - \mathrm{A}(x,y)} \\ + \Modulus{\mathrm{A}(x,y)} \Modulus{1- \mathrm{B}(x,y)}
            + \Modulus{\mathrm{A}(x,y)}\Modulus{\mathrm{B}(x,y)}\Modulus{1 - \mathrm{C}(x,y)^m}.
\end{multline}
Let $C_4$ be as in \eqref{4.4}.
Now we choose $r_5 < r_4$ so that $1- C_4r_5^2 > \frac{1}{2}$.
Then by \eqref{4.3} and \eqref{4.4}, for all $(x,y) \in W^\prime(r_5)$, one has
\begin{equation} \label{EstOfTruncSchFnP}
\Modulus{1- \mathrm{A}(x,y)} \le \frac{C_3 \rho(x,y)^3}{1 - C_4\rho(x,y)^2} \le\ \frac{C_3\rho(x,y)^3}{1 - C_4 r_5^2} \le C_8 \rho(x,y)^3,
\end{equation}
where $C_8 = 2C_3$.  Similarly, using \eqref{4.1}, \eqref{4.2}, \eqref{4.5}, \eqref{4.6}, and shrinking $r_5$ if necessary,
    one easily sees that there exist constants $C_9, C_{10} > 0$
        such that for all $(x,y)\in W^{\prime}(r_5)$, one has
\begin{align}
\Modulus{1- \mathrm{B}(x,y)} &\le C_9 \rho(x,y)^3 , \label{EstOfTruncOmega}\quad\text{and} \\
\Modulus{1-\mathrm{C}(x,y)} & \le C_{10} \rho(x,y)^5. \label{EstOfTruncSchFnR}
\end{align}
By \eqref{4.3} and \eqref{4.4}, and shrinking $r_5$ further if necessary,
    it is also clear that there exist constants $C_{11}, C_{12} > 0 $ such that
        for all $(x,y) \in W^{\prime}(r_5)$, one has
 \begin{align}
        \Modulus{\mathrm{A}(x,y)} &\le C_{11} \quad \text{and} \label{BdedTrunSchP}\\
        \Modulus{\mathrm{B}(x,y)} &\le C_{12}. \label{BdedTruncOmega}
        \end{align}
Now let $m\in \mathbb{N}$ and $(x,y)\in W^{\prime}(\frac{r_5}{m^{\frac{1}{5}}})$ be given.
By \eqref{EstOfTruncSchFnR}, one has
$
\Modulus{\mathrm{C}(x,y)} \le 1+\frac{C_{10} r_5^5}{m}
$,
so that for each $1\leq k\leq m-1$,
\begin{equation} \label{BdedTruncSchRPower}
\Modulus{\mathrm{C}(x,y)}^k  \le
 \left(1 + \frac{C_{10} r_5^5}{m}\right)^k \le \left(1 + \frac{C_{10} r_5^5}{m}\right)^m \le e^{C_{10} r_5^5},
 \quad\text{and thus}
\end{equation}
\begin{align} \label{EstOfTruncSchFnRPower}
& \Modulus{1 - \mathrm{C}(x,y) ^m} \\
\nonumber
= {}& \Modulus{1- \mathrm{C}(x,y)} \cdot \Modulus{1 + \mathrm{C}(x,y) + \mathrm{C}(x,y)^2 +
 \cdots + \mathrm{C}(x,y)^{m-1}} \\
\nonumber\le {}& C_{10} \rho(x,y)^5 \cdot m \cdot e^{C_{10} r_5^5} \\
\nonumber = {} & C_{13} \cdot m \rho(x,y)^5,\quad\text{where }C_{13} := C_{10} e^{C_{10} r_5^5}.
\end{align}
Combining \eqref{RephrasePointwiseOfExprOne},
                \eqref{EstOfTruncSchFnP},
                \eqref{EstOfTruncOmega},
                \eqref{BdedTrunSchP},
                \eqref{BdedTruncOmega},
                \eqref{EstOfTruncSchFnRPower},
        one has, for all $(x,y) \in W^\prime(\frac{r_5}{m^{\frac{1}{5}}})$,
        \begin{equation}
\begin{aligned}[t]
&\Modulus{ 1 -   \frac{T^{(m)}_\sigma(x,y)}{\schFn{R}(x,y)^m \schFn{P}(x,y)}} \\
\le {} &  C_8 \rho(x,y)^3 + C_{11} C_9 \rho(x,y)^3  + C_{11} C_{12} C_{13} m\rho(x,y)^5 \\
= {} &  C_{7} (\rho(x,y)^3 + m\rho(x,y)^5),
\end{aligned}
\end{equation}
where $C_{7} := \max \{C_{8} + C_{11}C_9, C_{11}C_{12}C_{13}\}$.
This finishes the proof of the lemma.
        \end{proof}
    \begin{lemm} \label{AsymEstOfRho}
    There exists a constant $ r_6 > 0$ with $r_6 < r_4$ such that,
            for all $(x,y)\in W(r_6)$, the quantity $\rho(y,x)$ is well-defined and $\rho(y,x) \le 2 \rho(x,y)$.
    \end{lemm}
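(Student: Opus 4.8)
The plan is to exploit the manifest symmetry of the Cauchy--Schwarz function $\schFn{R}$ together with its quadratic approximation near the diagonal recorded in \eqref{4.2}. First I would note that $\schFn{R}(x,y)=\schFn{R}(y,x)$ for all $x,y\in X$ (immediate from the formula in \eqref{diastasis}) and that $\rho(x,y)=|z(y)|$ is independent of the choice of Bochner coordinate at $x$. By \eqref{4.2}, for $(x,y)\in W(r_4)$ one has $\schFn{R}(x,y)=1-\rho(x,y)^2+E(x,y)$ with $|E(x,y)|\le C_2\rho(x,y)^4$. Hence, on any region where both $(x,y)$ and $(y,x)$ lie in $W(r_4)$, equating the two expansions coming from $\schFn{R}(x,y)=\schFn{R}(y,x)$ yields
\begin{equation}\label{symrho}
\bigl|\rho(y,x)^2-\rho(x,y)^2\bigr|\le C_2\bigl(\rho(x,y)^4+\rho(y,x)^4\bigr).
\end{equation}

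Before \eqref{symrho} is usable I must arrange that $\rho(y,x)$ is well-defined and that $(y,x)\in W(r_4)$, and this is where the choice of $r_6$ enters. I would first record the fact --- proved by a compactness argument of the same kind used in Section \ref{Section 3} to produce $r_1$ and the principal bundle $\mathcal G$ --- that the sets $\{W(r)\}_{0<r<r_1}$ form a neighbourhood basis of the diagonal $D$ in $X\times X$. (Indeed, if some open $U\supseteq D$ contained no $W(r)$, one could pick $(x_n,y_n)\in W(1/n)\setminus U$; a convergent subsequence would have limit outside $U$, hence off $D$, yet $\rho(x_n,y_n)=|z(y_n)|\to 0$ forces $y_n$ and its center $x_n$ to have a common limit --- a contradiction.) Writing $\widetilde W(r):=\{(x,y):(y,x)\in W(r)\}$ for the image of $W(r)$ under the flip $(x,y)\mapsto(y,x)$, which is a homeomorphism of $X\times X$ fixing $D$, the set $W(\epsilon)\cap\widetilde W(\epsilon)$ is an open neighbourhood of $D$ for every $\epsilon$ with $0<\epsilon\le r_4$, so the neighbourhood-basis property provides a constant $r_6$ with $0<r_6<r_4$ and $W(r_6)\subseteq W(\epsilon)\cap\widetilde W(\epsilon)$. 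Here I would fix $\epsilon$ at the outset so that $\epsilon\le r_4$ and $C_2\epsilon^2\le 3/5$. Then for $(x,y)\in W(r_6)$ one has $(y,x)\in W(\epsilon)\subseteq W(r_4)$ --- so $\rho(y,x)$ is defined and \eqref{4.2} applies to $(y,x)$ --- and also $\rho(x,y),\rho(y,x)<\epsilon$.

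Finally, for $(x,y)\in W(r_6)$, substituting $\rho(x,y)^4<\epsilon^2\rho(x,y)^2$ and $\rho(y,x)^4<\epsilon^2\rho(y,x)^2$ into \eqref{symrho} gives $(1-C_2\epsilon^2)\rho(y,x)^2\le(1+C_2\epsilon^2)\rho(x,y)^2$, hence
\[
\rho(y,x)^2\le\frac{1+C_2\epsilon^2}{1-C_2\epsilon^2}\,\rho(x,y)^2\le 4\,\rho(x,y)^2
\]
by the choice $C_2\epsilon^2\le 3/5$, so that $\rho(y,x)\le 2\rho(x,y)$, as required. The only non-formal ingredient is the neighbourhood-basis claim --- equivalently, that for $r$ small enough, $y\in B(x,r)$ forces $x$ to lie inside the reverse chart $B(y,r_1)$ --- and this is routine compactness in the spirit of the earlier arguments of Section \ref{Section 3}, so I expect no real obstacle there.
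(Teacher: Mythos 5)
Your proof is correct and follows essentially the same route as the paper's: both arguments combine the symmetry $\Psi_R(x,y)=\Psi_R(y,x)$ with the quadratic approximation \eqref{4.2} near the diagonal, after a compactness argument guaranteeing that $(x,y)\in W(r_6)$ forces $(y,x)$ to lie in a region where $\rho(y,x)$ is defined and small. The only cosmetic difference is that the paper packages \eqref{4.2} as the two-sided bound $\tfrac12\rho^2\le|\Psi_R-1|\le 2\rho^2$ while you track the error term explicitly; both yield the factor $2$.
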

    \begin{proof}
    First it follows readily from the compactness of $X$, the construction of $\mathcal{G}$
        and the definition of $\rho$ that there exists a constant $r_7 >0$ with $r_7 < r_4$ such that,
            for $(x,y) \in W(r_7)$, the quantity $\rho(y,x)$ is well-defined and satisfies   $\rho(y,x) < r_4$.
    By shrinking $r_7$ if necessary and using \eqref{4.2}, we may assume that
    \begin{equation} \label{AsymEstOfRhoPrep}
    \frac{1}{2}\rho(x,y)^2 \le \Modulus{\Psi_R(x,y) - 1} \le 2\rho(x,y)^2\quad\text{for all }(x,y) \in W(r_7).
    \end{equation}
    Repeating the above argument (with $r_4$ replaced by $r_7$),
    one sees that there exists a constant $r_6 > 0$ with $r_6 < r_7$ such that,
        for $(x,y) \in W(r_6)$, one has $\rho(y,x) < r_7$.
Now let $(x,y) \in W(r_6) $ be given (so that $(y,x)\in W(r_7)$).
Then using \eqref{AsymEstOfRhoPrep} but with the roles of $x$ and $y$ interchanged,
    one has
    \begin{equation} \label{AsymOfRhoEstRight}
    \frac{1}{2} \rho(y,x)^2 \le \Modulus{\Psi_R(y,x) - 1 }.
    \end{equation}
Together with the identity $\Psi_R(x,y) = \Psi_R(y,x)$ and the second inequality in \eqref{AsymEstOfRhoPrep},
        one has $\frac{1}{2}\rho(y,x)^2 \le 2\rho(x,y)^2$ and thus $\rho(y,x) \le 2\rho(x,y)$.
    \end{proof}

    For $\mathbb C^n$, we denote its Euclidean ball centered at $0$ and of radius $r$ and its Euclidean volume form by
    \begin{align}
    \label{EuclideanBall}B(r):&=\{z\in \mathbb C^n\,\big|\, |z|<r\}\quad\text{and}\\
    \label{Euclideanvolumeform}
    \diff V (z) &= \left(\frac{\sqrt{-1}}{2}\right)^n \diff z_1 \wedge \diff \conj{z_1} \wedge
                                \cdots \wedge \diff z_n \wedge \diff \conj{z_n}.
    \end{align}

    \begin{lemm}\label{intrhok}
    There exist constants $C_{14}, \, r_8 > 0$ with $r_8 < r_4$ such that,
        for all $k,r\geq 0$ and all $x\in X$,
            if $r < r_8$, then
            \begin{equation}
            \int_{y\in B(x,r)} \rho(x,y)^k \Omega(y) \le C_{14} r^{2n + k}.
            \end{equation}
    \end{lemm}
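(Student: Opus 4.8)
The plan is to reduce the statement to an explicit Euclidean integral by working in Bochner coordinates centered at $x$. First I would fix a constant $r_8$ with $0<r_8<r_4$ small enough that, for every $x\in X$ and every $\hat z\in\mathcal G_x$, the associated coordinate functions $z:B(x,r_1)\to\mathbb C^n$ restrict, for each $0<r<r_8$, to a biholomorphism of the Bochner ball $B(x,r)$ (cf. \eqref{Bxr}) onto the Euclidean ball $B(r)$ (cf. \eqref{EuclideanBall}); the existence of such an $r_8$ uniform in $x$ follows from the compactness of $X$ together with the construction of the principal $\mathbb U(n)$-bundle $\mathcal G\to X$, by the same kind of argument that produced the constant $r_1$. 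Keeping $r_8<r_4$ also ensures that Lemma \ref{Taylorestimate}, and in particular the estimate \eqref{4.6}, is available on $W(r_8)$.

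Next, fixing $x\in X$, $k\geq 0$ and $r$ with $0\le r<r_8$, and choosing any $\hat z\in\mathcal G_x$, I would change variables $w=z(y)$ in the integral. By \eqref{OmegaOmegaz} and \eqref{dVz}, the restriction of $\Omega$ to $B(x,r)$ equals $\Omega(\hat z)\,\diff V(\hat z)$, and $\diff V(\hat z)$ is precisely the pullback under $z$ of the standard Euclidean volume form $\diff V$ of \eqref{Euclideanvolumeform}; moreover $\rho(x,y)=|z(y)|$ by \eqref{rhoxy}. Hence the integral becomes $\int_{w\in B(r)}|w|^k\,\bigl(\Omega(\hat z)\circ z^{-1}\bigr)(w)\,\diff V(w)$. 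Since $r<r_8<r_4$, every $y\in B(x,r)$ satisfies $(x,y)\in W(r_4)$ with $\rho(x,y)<r_8$, so \eqref{4.6} gives $\Omega(\hat z)(y)\le 1+C_6 r_8^2$; this bounds the extra factor in the integrand and lets me pull out the constant $1+C_6 r_8^2$.

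It then remains to estimate the purely Euclidean quantity $\int_{w\in B(r)}|w|^k\,\diff V(w)$. Passing to polar coordinates in $\mathbb R^{2n}\cong\mathbb C^n$ (recall $\diff V$ is the standard Lebesgue measure) gives $a_n\int_0^r t^{2n-1+k}\,\diff t=\frac{a_n}{2n+k}\,r^{2n+k}$ for a constant $a_n>0$ depending only on $n$; since $k\ge 0$ this is at most $\frac{a_n}{2n}\,r^{2n+k}$. This last step is the crucial one: it pins the exponent at $2n+k$ and makes the prefactor $\tfrac{1}{2n+k}$ uniform in $k$. Combining with the previous paragraph yields the lemma with $C_{14}:=a_n(1+C_6 r_8^2)/(2n)$.

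The lemma is elementary and I do not anticipate a serious obstacle. The only point requiring a little care is the first step — arranging that $z$ is a genuine biholomorphism $B(x,r)\to B(r)$ with $r_8$ chosen uniformly over the compact manifold $X$ — and the essentially harmless observation that $2n+k\ge 2n$ renders the $k$-dependent factor $\tfrac{1}{2n+k}$ bounded, which is what makes the single constant $C_{14}$ work for all $k$.
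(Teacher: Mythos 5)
Your proposal is correct and follows essentially the same route as the paper: pass to Bochner coordinates, bound the density $\Omega(\hat z)$ uniformly via \eqref{4.6}, and reduce to the explicit Euclidean integral $\int_{B(r)}|w|^k\,\diff V(w)=O(r^{2n+k}/(2n+k))$, absorbing the $k$-uniform factor into the constant. The only cosmetic difference is that you spell out the biholomorphism onto $B(r)$ and the bound $\tfrac{1}{2n+k}\le\tfrac{1}{2n}$, which the paper leaves implicit.
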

    \begin{proof}
    One easily sees from \eqref{4.6} that
        there exist constants $C_{15}, \, r_8 > 0$ with $r_8 < r_4$ such that,
         for all $x\in X$, all $\hat{z} \in \mathcal{G}_x$ and all $y\in B(x, r_8)$,
            one has $\Omega(\hat{z})(y) \le C_{15}$.
    Hence, in terms of the coordinate functions $z: B(x,r_4) \to \C^n$ associated to
    $\hat{z}$,
    \eqref{EuclideanBall} and \eqref{Euclideanvolumeform},
                    one has, for each $k>0$ and $0 < r< r_8$,
                    \begin{align}\label{rhok}
                     \int_{y\in B(x,r)} \rho(x,y)^k \Omega(y)
                        &\le C_{15} \int_{z\in B(r)} |z|^k \, \diff V (z)  \\
                  \nonumber       &= C_{15} \frac{2\pi^n r^{2n+k}}{(n-1)! (2n+k)} \\
                         &\le C_{14} r^{2n+k}, \quad\text{where }C_{14 } := 2\pi^n C_{15}.\nonumber
                    \end{align}
     Here the second line in \eqref{rhok} follows from a straightforward computation.
    \end{proof}

\begin{prop} \label{EstimateI}
There exist constants $C_{16}, \, r_{9} > 0$ such that,
    for all $m\in \mathbb{N}$,
            all $s\in H^0(X, L^m\otimes E)$
            and all $r$ satisfying $0 < r \le \frac{r_9}{m^{\frac{n+2}{2n+5}}} $, one has
            \begin{equation}
        \Modulus{\mathrm{I}} \le  \frac{C_{16}}{m^{n+1}} \norm{s}^2.
        \end{equation}
Here $\mathrm{I}$ is as in \eqref{IIIIII} (with $r$ there as above).
\end{prop}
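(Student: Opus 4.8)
The plan is to bound $\Modulus{\mathrm{I}}$ starting from the estimate \eqref{tempEstOfONE} and feeding in the pointwise bounds already established. First I would fix a sufficiently small constant $r_9>0$ (in particular with $r_9\le r_5$, $r_9<r_6$ and $2r_9<r_8$); since $\frac{n+2}{2n+5}\ge\frac15$, the hypothesis $0<r\le r_9/m^{(n+2)/(2n+5)}$ then guarantees, for every $m\in\mathbb N$, that $W^\prime(r)\subseteq W^\prime(r_5/m^{1/5})$, that $r<r_6$, and that $2r<r_8$ — exactly the smallness conditions needed to invoke Lemmas \ref{PtwiseEstOfExprOne}, \ref{AsymEstOfRho} and \ref{intrhok} (the last also with radius $2r$) on $W^\prime(r)$. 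Since $\rho(x,y)<r$ on $W^\prime(r)$, after possibly shrinking $r_9$ the estimates \eqref{4.2} and \eqref{4.4} also give $0\le\schFn{R}(x,y)\le 1$ and $\schFn{P}(x,y)^{1/2}\le C_0$ on $W^\prime(r)$ for some constant $C_0$ independent of $m$.

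Next I would bound the integrand in \eqref{tempEstOfONE}: using $\schFn{R}(x,y)^{m/2}\le 1$, $\schFn{P}(x,y)^{1/2}\le C_0$, the estimate $\Modulus{1-T^{(m)}_\sigma(x,y)/(\schFn{R}(x,y)^m\schFn{P}(x,y))}\le C_7(\rho(x,y)^3+m\rho(x,y)^5)$ from Lemma \ref{PtwiseEstOfExprOne}, and the Cauchy--Schwarz step $\ptNorm{s}{x}\ptNorm{s}{y}\le\tfrac12(\ptNorm{s}{x}^2+\ptNorm{s}{y}^2)$, one obtains
\[
\Modulus{\mathrm{I}}\le C^\prime\iint_{(x,y)\in W^\prime(r)}\big(\rho(x,y)^3+m\rho(x,y)^5\big)\big(\ptNorm{s}{x}^2+\ptNorm{s}{y}^2\big)\,\Omega(y)\,\Omega(x)
\]
for a constant $C^\prime$ independent of $m,r,s$. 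This splits into four double integrals, with weights $\rho^3\ptNorm{s}{x}^2$, $m\rho^5\ptNorm{s}{x}^2$, $\rho^3\ptNorm{s}{y}^2$ and $m\rho^5\ptNorm{s}{y}^2$.

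For a term carrying $\ptNorm{s}{x}^2$ and a factor $\rho(x,y)^k$ ($k\in\{3,5\}$), I would integrate in $y$ first and apply Lemma \ref{intrhok}: $\int_{y\in B(x,r)}\rho(x,y)^k\,\Omega(y)\le C_{14}r^{2n+k}$, and then $\int_{X^\prime}\ptNorm{s}{x}^2\,\Omega(x)=\norm{s}^2$ by \eqref{L2inner product}, \eqref{pointwisenorm} together with $\int_{X\setminus X^\prime}\Omega=0$, so such a term is $\le C_{14}r^{2n+k}\norm{s}^2$. For a term carrying $\ptNorm{s}{y}^2$ I would integrate in $x$ first for fixed $y$; here the region is $\{x\in X^\prime:\rho(x,y)<r\}$, and this is the only genuinely delicate point, since $\rho$ is not symmetric and one cannot directly Fubini against $\ptNorm{s}{y}^2$. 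I would use Lemma \ref{AsymEstOfRho} to convert the region into an honest Bochner ball: $\rho(x,y)<r$ implies $\rho(y,x)\le 2\rho(x,y)<2r$, hence $x\in B(y,2r)$; bounding $\rho(x,y)^k<r^k$ on this set and applying Lemma \ref{intrhok} with exponent $0$ and radius $2r$ then gives $\int_{\{x\in X^\prime:\rho(x,y)<r\}}\rho(x,y)^k\,\Omega(x)\le r^k\,C_{14}(2r)^{2n}=2^{2n}C_{14}r^{2n+k}$, and then $\int_X\ptNorm{s}{y}^2\,\Omega(y)=\norm{s}^2$, so such a term is $\le 2^{2n}C_{14}r^{2n+k}\norm{s}^2$.

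Summing the four contributions yields $\Modulus{\mathrm{I}}\le C^{\prime\prime}(r^{2n+3}+mr^{2n+5})\norm{s}^2$ for some constant $C^{\prime\prime}$ independent of $m,r,s$. Finally, plugging in $r\le r_9/m^{(n+2)/(2n+5)}$ gives $mr^{2n+5}\le r_9^{2n+5}m^{-(n+1)}$ and $r^{2n+3}\le r_9^{2n+3}m^{-(n+2)(2n+3)/(2n+5)}\le r_9^{2n+3}m^{-(n+1)}$, the last inequality because $(n+2)(2n+3)=2n^2+7n+6\ge 2n^2+7n+5=(n+1)(2n+5)$; hence $\Modulus{\mathrm{I}}\le C_{16}m^{-(n+1)}\norm{s}^2$ with $C_{16}:=C^{\prime\prime}(r_9^{2n+3}+r_9^{2n+5})$, which is the asserted bound. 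Apart from the asymmetry of $\rho$ handled via Lemma \ref{AsymEstOfRho}, the argument is just bookkeeping with the constants already produced.
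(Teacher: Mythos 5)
Your proposal is correct and follows essentially the same route as the paper: both start from \eqref{tempEstOfONE}, invoke Lemma \ref{PtwiseEstOfExprOne} after checking $m^{-(n+2)/(2n+5)}\le m^{-1/5}$, handle the asymmetry of $\rho$ via Lemma \ref{AsymEstOfRho} and the inclusion $\{x:\rho(x,y)<r\}\subseteq B(y,2r)$, apply Lemma \ref{intrhok}, and conclude with the same exponent bookkeeping $(n+2)(2n+3)\ge(n+1)(2n+5)$. The only (immaterial) difference is that you decouple $\ptNorm{s}{x}\ptNorm{s}{y}$ by the pointwise inequality $ab\le\tfrac12(a^2+b^2)$, whereas the paper uses the integral Cauchy--Schwarz inequality to split each $\mathrm{J}_k$ into $\mathrm{J}_{k,1}^{1/2}\mathrm{J}_{k,2}^{1/2}$; both yield the bound $O(r^{2n+k})\norm{s}^2$.
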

\begin{proof}
Recall from the (SGCS-1) condition for $R$ that $|\Psi_R(x,y)|\le 1$ for all $(x,y) \in X\times X$.
One also easily sees from \eqref{4.4} that there exist constants $C_{17}, \, r_{10} > 0$ such that,
	for all $(x,y)\in W(r_o)$, one has $\Modulus{\Psi_P(x,y)} \le C_{17} $.
Now, we let $r_9 := \min \{r_0, r_4, r_5, \frac{r_8}{2}, r_{10}\} > 0$,
            where the $r_i$'s (and $C_j$'s) are as chosen before.
Let
$\mathrm{I}$ be as in \eqref{IIIIII} with $r$ there satisfying
$0<r\le \frac{r_9}{m^{\frac{n+2}{2n+5}}}$.  Note that for $n\geq 1$, one has $ \dfrac{1}{m^{\frac{n+2}{2n+5}}} <\frac{1}{m^{\frac{1}{5}}} $.
Hence from \eqref{tempEstOfONE} and Lemma \ref{PtwiseEstOfExprOne}, one has
\begin{equation} \label{prop1eq2}
\begin{aligned}[t]
\Modulus{\mathrm{I}}
    &\le C_7 C_{17} \iint_{(x,y)\in W^\prime(r)} \left(\rho(x,y)^3 + m\rho(x,y)^5\right) \ptNorm{s}{x}\,\ptNorm{s}{y} \Omega(y) \Omega(x)
            \\
   &= C_{7} C_{17}(\mathrm{J}_3(r) + m\mathrm{J}_5(r)),
\end{aligned}
\end{equation}
where, for $k > 0$ and $0 < r < r_0$,
\begin{equation}
\mathrm{J}_k (r) := \iint_{(x,y)\in W(r)} \rho(x,y)^k
    \ptNorm{s}{x}\,\ptNorm{s}{y} \Omega(y)\Omega(x).
\end{equation}
By the Cauchy-Schwarz inequality, one has
\begin{equation} \label{prop1eq4}
\Modulus{\mathrm{J}_k(r)} \le \left(\mathrm{J}_{k,1}(r)\right)^{\frac{1}{2}}
\left(\mathrm{J}_{k,2}(r)\right)^{\frac{1}{2}},
\end{equation}
where, as iterated integrals,
\begin{align}
\mathrm{J}_{k,1}(r) &:= \int_{x\in X} \int_{y\in B(x,r)} \rho(x,y)^{2k} \ptNorm{s}{x}^2
\Omega(y)\Omega(x), \label{prop1eq5} \\
\mathrm{J}_{k,2}(r) &:= \int_{x\in X} \int_{y\in B(x,r)} \ptNorm{s}{y}^2
                                     \Omega(y)\Omega(x). \label{prop1eq6}
\end{align}
By Lemma \ref{intrhok}, one has
\begin{equation} \label{prop1eq7}
\begin{aligned}[t]
\mathrm{J}_{k,1}(r) &= \int_{x\in X} \ptNorm{s}{x}^2 \int_{y\in B(x,r)} \rho(x,y)^{2k} \Omega(y)\Omega(x) \\
&\le C_{14} r^{2n+2k} \norm{s}^2.
\end{aligned}
\end{equation}
For each $y\in X$, let $B^\prime(y,r) := \{x\in X | \, \rho(x,y) < r\}$.
Note that, for $r \leq r_9$, it follows from Lemma \ref{AsymEstOfRho}
	that $B^\prime(y,r)\subseteq B(y,2r)\subseteq B(y,r_8)$.
Thus, upon interchanging the order of integration in \eqref{prop1eq6}, one has
\begin{equation} \label{propr1eq8}
\begin{aligned}[t]
\mathrm{J}_{k,2}(r) &= \int_{y\in X} \ptNorm{s}{y}^2 \int_{x\in B^\prime(y,r)} \Omega(x)\Omega(y) \\
&\le \int_{y\in X} \ptNorm{s}{y}^2 \int_{x\in B(y,2r)} \Omega(x)\Omega(y) \\
&\le C_{14} (2r)^{2n}  \norm{s}^2,
\end{aligned}
\end{equation}
where the last line follows from Lemma \ref{intrhok}.
Combining \eqref{prop1eq4}, \eqref{prop1eq7} and \eqref{propr1eq8},
    it follows that for $r \leq r_9$,
    \begin{equation} \label{prop1eq9}
    \begin{aligned}[t]
    \Modulus{\mathrm{J}_k(r)} &\le \left(C_{14} r^{2n+2k}\norm{s}^2\right)^{\frac{1}{2}} \left(C_{14} (2r)^{2n}\norm{s}^2\right)^{\frac{1}{2}} \\
    &= C_{14} 2^n r^{2n+k} \norm{s}^2.
    \end{aligned}
    \end{equation}
From \eqref{prop1eq2} and \eqref{prop1eq9}, one has
\begin{align}
\Modulus{\mathrm{I}} &\le C_7 C_{17} \left(C_{14} 2^n r^{2n+3} + m C_{14} 2^n r^{2n+5}\right) \norm{s}^2 \\
      \nonumber                            &= C_{18} \left(r^{2n+3} + m r^{2n+5}\right) \norm{s}^2,\quad\text{where }
      C_{18} := 2^n C_7 C_{17} C_{14}.
\end{align}
Now for each $r$ satisfying $0 < r \le \frac{r_9}{m^{\frac{n+2}{2n+5}}}$, one easily checks that
$
r^{2n+3} \le \frac{r_9^{2n+3}}{m^{n+1}}$ and $m r^{2n+5} \le \frac{r_9^{2n+5}}{m^{n+1}}$, and hence one has
\begin{equation}
|\mathrm{I}|\le  \frac{C_{16}}{m^{n+1}} \norm{s}^2,\quad\text{where }C_{16} : = C_{18} (r_9^{2n+3} + r_9^{2n+5}). \qedhere
\end{equation}
\end{proof}

\bigskip
\section{Estimation of II}\label{Section 5}

\medskip
In this section, we are going to estimate the expression $\mathrm{II}$ in (\ref{IIIIII}).
For $r>0$, let
$B(r)$ be as in \eqref{EuclideanBall}, and denote its closure by
$\overline{B(r)}:=\{z\in \mathbb C^n\,\big|\, |z|\leq r\}$.
Let $q$ be an analytic function admitting a power series expansion
$q(z) = \sum_{\alpha,\beta}  q_{\alpha\conj{\beta}} z^\alpha \conj{z^\beta}$ on $\overline{B(r)}$
(here the notation is as in (\ref{expanddiastasis})).
Then $q$
is said to {\emph{have only quasi-diagonal terms}} if $q_{\alphabeta} = 0$ whenever $|\alpha|\neq |\beta|$.

\begin{lemm} \label{meanValueProperty}
Let $f$ a holomorphic function admitting a power series expansion
$f(z)=\sum_{\alpha}f_\alpha z^\alpha$ on $\overline{B(r)}$, and let
 $q$ be an analytic function admitting a power series  expansion
 $ q(z) = \sum_{\alpha,\beta}  q_{\alpha\conj{\beta}} z^\alpha \conj{z^\beta}$ on $\overline{B(r)}$.   If $q$ has only quasi-diagonal terms, then
  \begin{equation}\label{ortho}
                    \integration{z}{B(r)}{\diff V (z)}{f(z) q(z)} = f(0) \integration{z}{B(r)}{\diff V (z)}{ q(z)}.
 \end{equation}
Here $\diff V (z)$ is as in \eqref{Euclideanvolumeform}.
\end{lemm}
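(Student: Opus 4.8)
The plan is to expand both $f$ and $q$ in their power series on $\overline{B(r)}$, multiply them, and integrate term by term, using the classical fact that distinct monomials $z^\alpha\conj{z^\beta}$ on a ball of the form $B(r)$ are orthogonal. Concretely, write $f(z)q(z) = \sum_{\alpha,\gamma,\delta} f_\alpha\, q_{\gamma\conj\delta}\, z^{\alpha+\gamma}\conj{z^\delta}$, and recall that
\begin{equation}\label{monomialorthogonality}
\integration{z}{B(r)}{\diff V(z)}{z^\mu \conj{z^\nu}} = 0 \quad\text{unless } \mu = \nu.
\end{equation}
This is a standard computation: writing each $z_i = s_i e^{\sqrt{-1}\theta_i}$ in polar coordinates, the $\theta_i$-integral of $e^{\sqrt{-1}(\mu_i - \nu_i)\theta_i}$ vanishes unless $\mu_i = \nu_i$, and this holds coordinatewise; the domain $B(r)$ is invariant under the torus action $z\mapsto(e^{\sqrt{-1}\theta_1}z_1,\dots,e^{\sqrt{-1}\theta_n}z_n)$, which is what makes \eqref{monomialorthogonality} work. (One should remark that absolute convergence of the relevant series on $\overline{B(r)}$, which is part of the hypothesis that $f$ and $q$ admit power series expansions there, justifies interchanging sum and integral.)

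With \eqref{monomialorthogonality} in hand, a nonzero contribution to $\integration{z}{B(r)}{\diff V(z)}{z^{\alpha+\gamma}\conj{z^\delta}}$ requires $\alpha + \gamma = \delta$, hence $|\alpha| + |\gamma| = |\delta|$. But $q$ has only quasi-diagonal terms, so $q_{\gamma\conj\delta} = 0$ unless $|\gamma| = |\delta|$; combining the two constraints forces $|\alpha| = 0$, i.e. $\alpha = 0$. Therefore the only surviving terms in the expansion of $f(z)q(z)$ are those with $\alpha = 0$, and
\begin{equation}
\integration{z}{B(r)}{\diff V(z)}{f(z)q(z)} = f_0 \integration{z}{B(r)}{\diff V(z)}{q(z)} = f(0)\integration{z}{B(r)}{\diff V(z)}{q(z)},
\end{equation}
since $f_0 = f(0)$. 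This gives \eqref{ortho}.

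The only genuinely delicate point is the justification of the term-by-term integration: one needs uniform (or dominated) convergence of the partial sums of $f(z)q(z)$ on $\overline{B(r)}$, which follows from the fact that a convergent power series on $\overline{B(r)}$ converges absolutely and uniformly on that compact set (shrinking $r$ infinitesimally if one wants to be pedantic, though the statement as phrased with convergence on the closed ball already supplies this). Everything else is the elementary torus-invariance computation \eqref{monomialorthogonality}. I would present \eqref{monomialorthogonality} as the main lemma-within-the-proof and then deduce \eqref{ortho} by the index bookkeeping above in a line or two.
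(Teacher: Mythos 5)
Your proof is correct and follows essentially the same route as the paper: both rest on the orthogonality relation $\int_{B(r)} z^\mu \conj{z^\nu}\,\diff V(z)=0$ for $\mu\neq\nu$ (proved by the same torus-invariance argument) and the same index bookkeeping showing that quasi-diagonality of $q$ kills every term with $|\alpha|>0$. The only cosmetic difference is that the paper first subtracts $f(0)$ and shows $\int g\,q=0$ for $g=f-f(0)$, whereas you isolate the surviving $\alpha=0$ terms directly; these are the same computation.
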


\begin{proof}
First we recall that for multi-indices $\alpha$ and $\beta$, one has
\begin{equation} \label{orthogonalityOfMonomials}
 \int_{B(r)} z^\alpha \conj{z}^\beta \diff V(z) = 0 \quad\text{whenever }\alpha \neq \beta,
\end{equation}
which can be verified easily by considering the change of variables given by $(z_1,\cdots,z_n)\to (e^{i\theta_1}z_1,
\cdots,e^{i\theta_n}z_n)$, and then letting $\theta_1,\cdots,\theta_n$ vary.
Let $g$ be the function given by $g(z)=f(z) - f(0)$ for $z\in\overline{B(r)}$.
Then $g(0) = 0$ and $g$ is also a holomorphic function admitting a power series expansion $g(z) =
 \sum_{|\gamma| > 0} g_\gamma z^\gamma$
 on $\overline{B(r)}$, noting that $g_\gamma=g(0)=0$ when $|\gamma|=0$.
Then one has
\begin{align}\label{integralequality}
\int_{B(r)} g(z) q(z) \diff V(z)
&=   \sum_{ |\gamma| > 0} \sum_{\alpha,\beta} g_\gamma q_{\alphabeta}
          \int_{B(r)} z^{\alpha + \gamma} \conj{z}^\beta \diff V(z)\\
\nonumber    &= \sum_{ |\gamma| > 0} \sum_{\alpha} g_\gamma q_{\alpha, \conj{\alpha + \gamma}}
                                                                                                   \int_{B(r)} |z^{\alpha + \gamma}|^{2} \diff V(z),
\end{align}
where the last equality follows from (\ref{orthogonalityOfMonomials}).  Since $q$ has only quasi-diagonal terms, it follows that for each $\alpha$ and $\gamma$ satisfying $|\gamma|>0$, one has $|\alpha+\gamma|=|\alpha|+|\gamma|>|\alpha|$, and thus $q_{\alpha, \conj{\alpha + \gamma}}=0$.
Hence one has
$ \integration{z}{B(r)}{\diff V (z)}{g(z) q(z)} = 0,
$
which leads to (\ref{ortho}) readily.
\end{proof}

\begin{lemm} \label{explicitSpherIntegral}
Notation as in \eqref{EuclideanBall} and \eqref{Euclideanvolumeform}.
For each integer $m,k \ge 0$ and real number $a>0$,
\begin{equation}
\int_{z\in B\left(\frac{1}{\sqrt{a}}\right)} \Modulus{z}^{2k} (1 - a |z|^2) ^m \, \diff V(z)
	= \frac{\pi^n (n+k-1)! m!}{(n-1)! (m+k+n)! a^{n+k}}.
\end{equation}	
\end{lemm}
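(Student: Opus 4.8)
The plan is to reduce the integral to a one–dimensional radial integral by passing to polar coordinates in $\mathbb R^{2n}\cong\mathbb C^n$. Recall that under the identification $z_j=x_j+\sqrt{-1}\,y_j$ the volume form $\diff V(z)$ of \eqref{Euclideanvolumeform} is just Lebesgue measure on $\mathbb R^{2n}$, since $\tfrac{\sqrt{-1}}{2}\,\diff z_j\wedge\diff\overline{z_j}=\diff x_j\wedge\diff y_j$. Hence, writing $|z|=t$, $\rho:=1/\sqrt a$, and using spherical coordinates, one has
\[
\int_{z\in B(\rho)}\Modulus{z}^{2k}(1-a|z|^2)^m\,\diff V(z)=\vol(S^{2n-1})\int_0^{\rho} t^{2k}(1-at^2)^m\,t^{2n-1}\,\diff t,
\]
where $\vol(S^{2n-1})$ is the Euclidean surface area of the unit sphere in $\mathbb R^{2n}$. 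To keep the argument self-contained I would record that $\vol(S^{2n-1})=\dfrac{2\pi^n}{(n-1)!}$, which follows by evaluating the Gaussian integral $\int_{\mathbb C^n}e^{-|z|^2}\,\diff V(z)=\pi^n$ in polar coordinates (the right-hand side equals $\vol(S^{2n-1})\cdot\tfrac12\Gamma(n)=\vol(S^{2n-1})\cdot\tfrac{(n-1)!}{2}$).

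Next I would carry out the substitution $u=at^2$, so that $t\,\diff t=\diff u/(2a)$ and the endpoints $t=0,\rho$ correspond to $u=0,1$. This turns the radial integral into a beta integral:
\[
\int_0^{\rho} t^{2(k+n-1)}(1-at^2)^m\,t\,\diff t=\frac{1}{2a^{k+n}}\int_0^1 u^{k+n-1}(1-u)^m\,\diff u=\frac{1}{2a^{k+n}}\cdot\frac{\Gamma(k+n)\Gamma(m+1)}{\Gamma(k+n+m+1)}.
\]
Since $k,n,m$ are nonnegative integers, $\Gamma(k+n)=(n+k-1)!$, $\Gamma(m+1)=m!$, and $\Gamma(k+n+m+1)=(m+k+n)!$. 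Multiplying by $\vol(S^{2n-1})=\tfrac{2\pi^n}{(n-1)!}$ from the first step, the factors of $2$ cancel and one obtains exactly $\dfrac{\pi^n(n+k-1)!\,m!}{(n-1)!\,(m+k+n)!\,a^{n+k}}$, as claimed.

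I do not expect any genuine obstacle here: this is a routine computation once the reduction to polar coordinates is in place, the only points needing (minor) care being the correct value of $\vol(S^{2n-1})$ and the bookkeeping of factorials in the beta-function evaluation. (As an alternative one could argue by induction on $m$, integrating by parts in the variable $u$ to pass from exponent $m$ to $m-1$ and reading off the base case $m=0$ from the same radial computation; I would nevertheless prefer the direct polar-coordinates route since it yields the closed form in a single step.)
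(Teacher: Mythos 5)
Your computation is correct: passing to polar coordinates on $\mathbb R^{2n}$, using $\vol(S^{2n-1})=\frac{2\pi^n}{(n-1)!}$, and evaluating the resulting beta integral via $u=at^2$ yields exactly the stated closed form, with all factorials and the factor $a^{-(n+k)}$ accounted for. The paper itself omits the proof as ``a direct calculation,'' and your argument is precisely the routine calculation intended, so there is nothing to compare beyond noting that you have supplied the details.
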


\begin{proof}
We will skip the proof, which follows from a direct calculuation.
\end{proof}

Let $X^\prime$ and $\sigma:X^\prime\to \mathcal{G}$ be as chosen in Section \ref{Section 3}.
Let $m\in \mathbb{N}$,
		$r$ be a number satisfying $0 < r < r_o$,
		$s\in H^0(X, L^m\otimes E)$, and
		$\mathrm{II}$ be as in $\eqref{IIIIII}$.
Rewriting the first term of $\mathrm{II}$ as an iterated integral, one has
\begin{equation} \label{exprTwoEqOne}
\mathrm{II} = \int_{x\in X^\prime} \Lambda(x) \Omega(x) - \frac{\pi^n}{m^n} \norm{s}^2,
\end{equation}
 where, for each $x\in X^\prime$,
 \begin{equation} \label{exprTwoEqTwo}
\Lambda(x) := \int_{y\in B(x,r)} T_\sigma^{(m)}(x,y) \frac{s(y)\conj{s(x)}}{R(\yx)^m P(\yx)} \Omega(y)
\end{equation}
Using \eqref{dVz}, \eqref{OmegaOmegaz}, \eqref{Tm},
	for $x\in X^\prime$, one has
\begin{equation}  \label{exprTwoEqThree}
\Lambda(x) = \int_{y\in B(x,r)} \lambda(x,y) \frac{s(y)\conj{s(x)}}{R(\yx)^m P(\yx)} \, \diff V(\sigma(x)) (y),
	\end{equation}	
where, for $(x,y)\in W^{\prime}(r_o)$,
\begin{equation} \label{exprTwoEqFour}
\lambda(x,y) := \Psi_{P, \le 2} (\sigma(x))(y) \cdot \Omega_{\le 2} (\sigma(x))(y) \cdot
					\left(\Psi_{R,\le 4}(\sigma(x))(y)\right)^m .
	\end{equation}	
For each $x\in X^\prime$
	 and in terms of the coordinate functions $z: B(x,r_0) \to \C^n$ associated to $\sigma(x)$,
	 	it follows readily from \eqref{expandPsiR4}, \eqref{expandOmegaz2}, \eqref{expandPsiP2}
	 		that $\lambda(x,y)$ is an analytic function in the variable $z=z(y)$ and has only quasi-diagonal terms.
Also, the quotient $\frac{s(y)\conj{s(x)}}{R(\yx)^m P(\yx)}$ is a holomorphic function in the variable $y$.
Thus, by Lemma \ref{meanValueProperty}, one has, for $x\in X^\prime$,
\begin{equation} \label{exprTwoEqSix}
\Lambda(x) = \ptNorm{s}{x}^2 \int_{y\in B(x,r)} \lambda(x,y) \, \diff V(\sigma(x))(y).
	 			\end{equation}	 			

Similar to Lemma \ref{PtwiseEstOfExprOne}, we have the following pointwise estimate:
\begin{lemm} \label{PtwiseEstOfExprTwo}
There exist constants $C_{19}, \, r_{10} > 0$ with $r_{10} < r_4$ such that,
	for all $m\in \mathbb{N}$ and all $(x,y)\in W^\prime(r_{10})$, one has
	\begin{equation} \label{exprTwoEqFive}
\begin{aligned} [t]
&\Modulus{\lambda(x,y) - \left(1 - \rho(x,y)^2\right)^m} \\
	\le {} &C_{19} \left(\rho(x,y)^2 + m\rho(x,y)^4\right) \left(1 - \frac{\rho(x,y)^2}{2}\right)^{m-1}.
	\end{aligned}
	\end{equation}	

\end{lemm}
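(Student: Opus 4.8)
The plan is to exploit the product structure of $\lambda(x,y)$ and to compare each of its three factors with an obvious leading term. Throughout write $\rho := \rho(x,y)$, and for $(x,y)\in W^\prime(r_o)$ set
\[
\mathrm{A} := \Psi_{P,\le 2}(\sigma(x))(y), \qquad \mathrm{B} := \Omega_{\le 2}(\sigma(x))(y), \qquad \mathrm{C} := \Psi_{R,\le 4}(\sigma(x))(y),
\]
so that $\lambda(x,y) = \mathrm{A}\,\mathrm{B}\,\mathrm{C}^m$ by \eqref{exprTwoEqFour}. The starting point is the algebraic identity
\[
\lambda(x,y) - (1-\rho^2)^m \;=\; \mathrm{A}\mathrm{B}\bigl(\mathrm{C}^m - (1-\rho^2)^m\bigr) \;+\; (\mathrm{A}\mathrm{B} - 1)(1-\rho^2)^m ,
\]
which reduces the estimate to bounding the two quantities $\mathrm{C}^m - (1-\rho^2)^m$ and $\mathrm{A}\mathrm{B}-1$.

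For the individual factors I would first record the elementary pointwise bounds
\[
|\mathrm{A} - 1| \le C\rho^2, \qquad |\mathrm{B} - 1| \le C\rho^2, \qquad |\mathrm{C} - (1-\rho^2)| \le C\rho^4 ,
\]
valid on $W^\prime(r)$ for all sufficiently small $r$, where here and below $C$ denotes a positive constant independent of $m$, $x$ and $y$. These follow at once by combining \eqref{4.1}--\eqref{4.2} and \eqref{4.3}--\eqref{4.6} (or, alternatively, directly from the finite-sum definitions \eqref{expandPsiR4}, \eqref{expandOmegaz2} and \eqref{expandPsiP2} together with the boundedness of the relevant Taylor coefficients, which are continuous over the compact principal $\mathbb{U}(n)$-bundle $\mathcal{G}$). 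In particular $|\mathrm{A}|$ and $|\mathrm{B}|$ are bounded on such a $W^\prime(r)$. I then take $r_{10}<r_4$ small enough that $r_{10}<1$ and that the third bound above forces $C\rho^4 \le \frac{1}{2}\rho^2$ on $W^\prime(r_{10})$; consequently, on $W^\prime(r_{10})$ one has $0 \le 1-\rho^2 \le 1$ and
\[
|\mathrm{C}| \;\le\; (1-\rho^2) + \tfrac{1}{2}\rho^2 \;=\; 1 - \frac{\rho^2}{2} \;\le\; 1 .
\]

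It then remains to put the pieces together. For the second term of the identity, $|\mathrm{A}\mathrm{B} - 1| \le |\mathrm{A}-1|\,|\mathrm{B}| + |\mathrm{B}-1| \le C\rho^2$, while $(1-\rho^2)^m \le (1-\frac{\rho^2}{2})^m \le (1-\frac{\rho^2}{2})^{m-1}$ since $0\le 1-\frac{\rho^2}{2}\le 1$; hence this term is at most $C\rho^2(1-\frac{\rho^2}{2})^{m-1}$. For the first term, apply the factorisation
\[
\mathrm{C}^m - (1-\rho^2)^m \;=\; \bigl(\mathrm{C} - (1-\rho^2)\bigr)\sum_{k=0}^{m-1} \mathrm{C}^k (1-\rho^2)^{m-1-k} ;
\]
since $|\mathrm{C}| \le 1 - \frac{\rho^2}{2}$ and $0 \le 1-\rho^2 \le 1-\frac{\rho^2}{2}$, every summand satisfies $|\mathrm{C}|^k(1-\rho^2)^{m-1-k} \le (1-\frac{\rho^2}{2})^{m-1}$, so the sum is bounded by $m(1-\frac{\rho^2}{2})^{m-1}$; combining this with $|\mathrm{C} - (1-\rho^2)| \le C\rho^4$ and the boundedness of $|\mathrm{A}\mathrm{B}|$ shows the first term is at most $C\,m\rho^4(1-\frac{\rho^2}{2})^{m-1}$. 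Adding the two bounds gives \eqref{exprTwoEqFive} with an appropriate $C_{19}$.

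The one point requiring care --- and the reason the estimate is organized this way --- is getting the bound uniformly in $m$: bounding both $|\mathrm{C}|$ and $1-\rho^2$ by the single quantity $1-\frac{\rho^2}{2}\in[0,1]$ forces each of the $m$ terms of $\sum_{k=0}^{m-1}\mathrm{C}^k(1-\rho^2)^{m-1-k}$ to be $\le (1-\frac{\rho^2}{2})^{m-1}$, and the decay factor $(1-\frac{\rho^2}{2})^{m-1}$ produced in this way is exactly what will make the contribution $\mathrm{II}$ of the correct order after this pointwise estimate is integrated. Beyond this bookkeeping I do not expect any genuine obstacle.
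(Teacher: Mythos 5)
Your argument is correct and follows essentially the same route as the paper: the same pointwise bounds from Lemma \ref{Taylorestimate} on the truncated factors, the same observation that $\Psi_{R,\le 4}(\sigma(x))(y)\le 1-\rho^2/2$, and the same telescoping factorisation of the difference of $m$-th powers with each summand dominated by $(1-\rho^2/2)^{m-1}$. The only cosmetic difference is that the paper groups $\Psi_{P,\le 2}\cdot\Omega_{\le 2}$ into a single factor before splitting off the error terms, whereas you keep them separate; this changes nothing of substance.
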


\begin{proof}
For $(x,y)\in W^{\prime}(r_0)$, we rewrite \eqref{exprTwoEqFour} as
\begin{align} \label{exprTwoEqSix1}
\lambda(x,y) &= \mathcal{A}(x,y) \mathcal{B}(x,y)^m,\quad\text{where}\\
\mathcal{A}(x,y) &:= \Psi_{P, \le 2}(\sigma(x))(y) \cdot \Omega_{\le 2}(\sigma(x))(y),\notag \\
\mathcal{B}(x,y) &:= \Psi_{R,\le 4}(\sigma(x))(y).\notag
\end{align}
It follows readily from Lemma \ref{Taylorestimate} that
	there exist constants $C_{20}, \, C_{21}, \, r_{10} > 0$ with $r_{10} < \min \{r_4, 1\}$ such that,
		for all $(x,y)\in W^\prime(r_{10})$, one has
\begin{align}
\Modulus{\mathcal{A}(x,y) - 1} &\le C_{20} \rho(x,y)^2, \label{exprTwoEqSeven} \\
\Modulus{\mathcal{B}(x,y) - \left(1 - \rho(x,y)^2\right)}& \le C_{21} \rho(x,y)^4, \text{ and }\label{exprTwoEqEight} \\
0\le \mathcal{B}(x,y) &\le 1 - \frac{\rho(x,y)^2}{2}. \label{exprTwoEqNine}
		 \end{align}		
		 (For example, \eqref{exprTwoEqEight} follows immediately from \eqref{4.1} and \eqref{4.2},
		 	while \eqref{exprTwoEqNine} follows from \eqref{exprTwoEqEight},
		 		upon shrinking $r_{10}$ if necessary.)
By \eqref{exprTwoEqSix1}, one has
\begin{equation} \label{exprTwoEqTen}
\begin{aligned}[t]
&\lambda(x,y) - \left(1 - \rho(x,y)^2\right)^m \\
= {} & \left(\mathcal{A}(x,y) - 1\right) \mathcal{B}(x,y)^m + \mathcal{B}(x,y)^m - \left(1 - \rho(x,y)^2\right)^m \\
= {} & \left(\mathcal{A}(x,y) - 1\right) \mathcal{B}(x,y)^m \\
	& \qquad+
			\left[\mathcal{B}(x,y) - (1- \rho(x,y)^2)\right]
				\cdot \sum_{j=0}^{m-1} \mathcal{B}(x,y)^j \left(1 - \rho(x,y)^2\right)^{m-1 - j}.
\end{aligned}
		 		\end{equation}
For all $(x,y) \in W^{\prime}(r_{10})$, from \eqref{exprTwoEqNine}, one has
\begin{equation}
\begin{aligned}[t]
&\sum_{j=0}^{m-1} \mathcal{B}(x,y)^j \left(1 - \rho(x,y)^2\right)^{m-1 - j} \\
	\le {}&\sum_{j=0}^{m-1} \left(1 - \frac{\rho(x,y)^2}{2}\right)^j \left(1 - \rho(x,y)^2\right)^{m-1-j} \\
	\le {} &m \left(1 -\frac{\rho(x,y)^2}{2}\right)^{m-1},
		 				 		\end{aligned}		 	
\end{equation}
and together with \eqref{exprTwoEqSeven}, \eqref{exprTwoEqNine}, \eqref{exprTwoEqTen},
	one has
	\begin{align}			 					 		
	&\Modulus{\lambda(x,y) - \left(1 - \rho(x,y)^2\right)^m} \\
	\le {} & C_{20} \rho(x,y)^2 \left(1 - \frac{\rho(x,y)^2}{2}\right)^m
			 + C_{21}\rho(x,y)^4 m \left(1 - \frac{\rho(x,y)^2}{2}\right)^{m-1} \notag\\
	\le {} & C_{19} \left(\rho(x,y)^2 + m\rho(x,y)^4\right)\left(1 -\frac{\rho(x,y)^2}{2}\right)^{m-1}	,	 \notag
	\end{align}
	where $C_{19} := \max\{C_{20}, C_{21}\}$, noting that $0 < 1- \frac{\rho(x,y)^2}{2}< 1$.
\end{proof}

\begin{lemm} \label{intEstOfExprTwo}
There exist constants $C_{22}, \, r_{11} > 0$ such that,
	for all $m\in \mathbb{N}$, all $x\in X^\prime$ and all $r$ satisfying $\sqrt{\frac{(n+1)\log m}{m}} < r < r_{11}$,
		one has
		\begin{equation}
		\Modulus{\int_{y\in B(x,r)} \lambda(x,y) \, \diff V(\sigma(x))(y) - \frac{\pi^n}{m^n}}
			\le \frac{C_{22}}{m^{n+1}}.
		\end{equation}
\end{lemm}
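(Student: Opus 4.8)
The plan is to compare the integral $\int_{y\in B(x,r)}\lambda(x,y)\,dV(\sigma(x))(y)$ with the model quantity $\int_{z\in B(r)}(1-|z|^2)^m\,dV(z)$, to evaluate the latter exactly via Lemma \ref{explicitSpherIntegral}, and to control all error terms by elementary factorial estimates. Throughout I would work in the Bochner coordinate functions $z=z(y)$ associated to $\sigma(x)$, so that $\rho(x,y)=|z|$ (cf.\ \eqref{rhoxy}) and, by \eqref{dVz} and \eqref{Euclideanvolumeform}, the integral $\int_{y\in B(x,r)}(\,\cdot\,)\,dV(\sigma(x))(y)$ becomes an ordinary integral over the Euclidean ball $B(r)\subset\mathbb C^n$. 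I would fix $r_{11}:=\min\{r_{10},1\}$ at the start (with $r_{10}$ as in Lemma \ref{PtwiseEstOfExprTwo}), shrinking it further if needed, so that for $x\in X^\prime$ and $0<r<r_{11}$ one has $\{x\}\times B(x,r)\subset W^\prime(r_{10})$ (making \eqref{exprTwoEqFive} available) and also $B(r)\subseteq B(1)\subset B(\sqrt 2)$.

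\emph{Step 1: error from replacing $\lambda$ by the model.} Integrating the pointwise bound \eqref{exprTwoEqFive} over $B(r)$ gives
\[
\Bigl|\int_{y\in B(x,r)}\lambda(x,y)\,dV(\sigma(x))(y)-\int_{z\in B(r)}(1-|z|^2)^m\,dV(z)\Bigr|\le C_{19}\int_{z\in B(r)}\bigl(|z|^2+m|z|^4\bigr)\Bigl(1-\tfrac{|z|^2}{2}\Bigr)^{m-1}dV(z).
\]
Since the integrand on the right is nonnegative on $B(\sqrt 2)$ and $B(r)\subseteq B(\sqrt 2)$, I would enlarge the domain to $B(\sqrt 2)$ and apply Lemma \ref{explicitSpherIntegral} with $a=\tfrac12$ and exponent $m-1$, once with $k=1$ and once with $k=2$. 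This bounds the right-hand side by an $n$-dependent constant times $\tfrac{(m-1)!}{(m+n)!}+m\,\tfrac{(m-1)!}{(m+n+1)!}$; since $\tfrac{(m-1)!}{(m+n)!}=\prod_{j=0}^{n}(m+j)^{-1}\le m^{-(n+1)}$ and $m\,\tfrac{(m-1)!}{(m+n+1)!}=\prod_{j=1}^{n+1}(m+j)^{-1}\le m^{-(n+1)}$, the whole error is $O(1/m^{n+1})$.

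\emph{Step 2: the model integral and its tail.} Because $r<r_{11}\le 1$ and $1-|z|^2\ge 0$ on $B(1)$, I would write $\int_{B(r)}(1-|z|^2)^m\,dV(z)=\int_{B(1)}(1-|z|^2)^m\,dV(z)-\int_{r<|z|<1}(1-|z|^2)^m\,dV(z)$. Lemma \ref{explicitSpherIntegral} with $a=1$, $k=0$ gives $\int_{B(1)}(1-|z|^2)^m\,dV(z)=\dfrac{\pi^n m!}{(m+n)!}=\dfrac{\pi^n}{\prod_{j=1}^n(m+j)}$, while the tail obeys the crude bound $0\le\int_{r<|z|<1}(1-|z|^2)^m\,dV(z)\le\dfrac{\pi^n}{n!}(1-r^2)^m\le\dfrac{\pi^n}{n!}e^{-mr^2}$. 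This is the sole place where the lower bound on $r$ enters: from $r^2>\dfrac{(n+1)\log m}{m}$ one gets $e^{-mr^2}\le m^{-(n+1)}$, so the tail is again $O(1/m^{n+1})$.

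\emph{Step 3: conclusion.} It remains to bound $\dfrac{\pi^n}{m^n}-\dfrac{\pi^n}{\prod_{j=1}^n(m+j)}$, which is nonnegative and, using $\prod_{j=1}^n(m+j)-m^n\le C_n\,m^{n-1}$ together with $\prod_{j=1}^n(m+j)\ge m^n$, is at most $\pi^n C_n\,m^{n-1}/m^{2n}=O(1/m^{n+1})$. Combining Steps 1--3 by the triangle inequality and absorbing all $n$-dependent constants into a single $C_{22}$ completes the proof, after possibly shrinking $r_{11}$ once more. I do not foresee a real obstacle: the only points requiring care are choosing $r_{11}$ small enough that every earlier ingredient is in force (the pointwise estimate \eqref{exprTwoEqFive} and the inclusions $B(r)\subseteq B(1)\subseteq B(\sqrt 2)$), and bookkeeping the negative power of $m$ contributed by each ratio of factorials — the logarithmic lower bound on $r$ being calibrated exactly so that the discarded tail of the model integral is no larger than $m^{-(n+1)}$.
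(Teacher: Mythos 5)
Your proposal is correct and follows essentially the same route as the paper: split off the model integral $\int_{B(r)}(1-|z|^2)^m\,\diff V(z)$ using the pointwise bound of Lemma \ref{PtwiseEstOfExprTwo}, evaluate everything via Lemma \ref{explicitSpherIntegral} with the same choices of $a$ and $k$, and use the lower bound on $r$ only to kill the tail over $B(1)\setminus B(r)$. The only (immaterial) differences are in Step 3, where you bound $\frac{\pi^n}{m^n}-\frac{\pi^n m!}{(m+n)!}$ by expanding $\prod_{j=1}^n(m+j)$ rather than by the paper's Bernoulli-type inequality, and in your use of $(1-r^2)^m\le e^{-mr^2}$ in place of the paper's equivalent logarithmic manipulation.
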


\begin{proof}
For $x\in X^\prime$, $m\in \mathbb{N}$ and $r > 0$, we let
	\begin{equation} \label{exprTwoEqEleven}
\begin{aligned}[t]
	\eta(x, r) &:= \int_{y\in B(x, r)} \lambda(x,y) \, \diff V(\sigma(x))(y) - \frac{\pi^n}{m^n} \\
                    &= \eta_1(x,r) + \eta_2(x,r),
	\end{aligned}
\end{equation}
where
\begin{align}
\eta_1(x,r) := \int_{y\in B(x,r)} \left[\lambda(x,y) - (1-\rho(x,y)^2)^m\right] \, \diff V(\sigma(x))(y) , \\
\eta_2(x,r) := \int_{y\in B(x,r)} \left(1 - \rho(x,y)^2\right)^m \, \diff V(\sigma(x))(y) - \frac{\pi^n}{m^n}.
\end{align}
In terms of the coordinate functions $z : B(x,r_o) \to \C^n$ associated to $\sigma(x) \in \mathcal{G}_x$,
	and identifying $B(x,r)$ with $B(r)$
			(following the notation in \eqref{EuclideanBall} and \eqref{Euclideanvolumeform}),
		it follows readily from Lemma \ref{PtwiseEstOfExprTwo} that,
			if $0 < r < r_{10}$, then
			\begin{equation} \label{exprTwoEqThirteen}
			\Modulus{\eta_1(x,r)} \le C_{19} \int_{B(r)} \left(|z|^2 + m|z|^4\right)
										\left(1 - \frac{1}{2}|z|^2\right)^{m-1} \, \diff V(z).
			\end{equation}
From Lemma \ref{explicitSpherIntegral} (with $k=1,2$, $a = \frac{1}{2}$, and $m$ replaced by $m-1$)
	and noting that $\frac{(m-1)!}{(m-1+n+k)!} \le \frac{1}{m^{n+k}}$, etc.,
	one has, for $r < \sqrt{2}$, 
	\begin{equation} \label{exprTwoEqFourteen}
	\begin{aligned}[t]
	\Modulus{\eta_1(x,r)} &\le
		\frac{C_{19}\pi^n n! (m-1)! 2^{n+1}}{(n-1)! (m+n)!}
			+ m \frac{C_{19}\pi^n (n+1)! (m-1)! 2^{n+2}}{(n-1)! (m+n+1)!} \\
			&\le \frac{C_{23}}{m^{n+1}},
	\end{aligned}
	\end{equation}
	where $C_{23} := C_{19} \pi^n \left(n \cdot 2^{n+1} + n(n+1) 2^{n+2}\right)$.
Similarly, for $r < 1$, one has
\begin{equation} \label{exprTwoEqFifteen}
\begin{aligned}[b]
\eta_2(x,r) &= \int_{B(r)} (1- |z|^2)^m \, \diff V(z) - \frac{\pi^n}{m^n} \\
&= \left(\int_{B(1)} (1- |z|^2)^m \, \diff V(z) - \frac{\pi^n}{m^n}\right)
		-   \int_{B(1) \setminus B(r)} (1- |z|^2)^m \, \diff V(z).
	\end{aligned}	
	\end{equation}
By Lemma \ref{explicitSpherIntegral} again (with $k=0$ and $a=1$), one has
\begin{equation} \label{exprTwoEqSixteen}
0 < \frac{\pi^n}{m^n } - \int_{B(1)} (1 - |z|^2)^m \, \diff V(z) = \frac{\pi^n}{m^n} - \frac{\pi^n m!}{(m+n)!}
	< \frac{\pi^n n(n+1)}{2 m^{n+1}},
\end{equation}
where the last inequality can be obtained by substituting $x_k=\frac{k}{m+k}$ into the
following generalization of Bernoulli's inequality (which follows from a straight-forward induction):
    $\displaystyle\prod_{k=1}^n (1-x_k)\geq 1-\sum_{k=1}^n x_k$ if $0\leq x_1,\ldots, x_n\leq 1$.
From pointwise consideration, one has
\begin{equation} \label{exprTwoEqSeventeen}
\begin{aligned}[t]
0 \le \int_{B(1)\setminus B(r)} (1 - |z|^2)^m \, \diff V(z)
    &\le (1 - r^2)^m \int_{B(1)\setminus B(r)} \, \diff V(z) \\
    &\le (1-r^2)^m \frac{\pi^n}{n!}.
\end{aligned}
\end{equation}
Note that, if $0 < r < 1$, then, upon taking the natural logarithm,
\begin{equation} \label{exprTwoEqEighteen}
(1 - r^2)^m < \frac{1}{m^{n+1}} \quad \Longleftrightarrow \quad -\log(1-r^2) > \frac{(n+1) \log m}{m}.
\end{equation}
Using the fact that $-\log(1-t) > t$ for all $0 < t < 1$,
	one sees that both sides of \eqref{exprTwoEqEighteen} hold if $r > \sqrt{\frac{(n+1) \log m}{m}}$.
For such $r$, it follows from \eqref{exprTwoEqFifteen},
							   \eqref{exprTwoEqSixteen},
							   \eqref{exprTwoEqSeventeen}
							   that
							   \begin{equation} \label{exprTwoEqNineteen}
				\Modulus{\eta_2(x,r)} \le \frac{\pi^n n(n+1)}{2 m^{n+1}} + \frac{\pi^n}{n!} \frac{1}{m^{n+1}}.			
\end{equation}	
Now we let $r_{11} = \min \{r_{10}, 1\} (> 0)$. Combining
            \eqref{exprTwoEqEleven},
			\eqref{exprTwoEqFourteen},
			\eqref{exprTwoEqNineteen},
			it follows that, if $\sqrt{\frac{(n+1)\log m}{m}} < r < r_{11}$,
			then
			\begin{align}
			&\Modulus{\int_{y\in B(x,\rad)} \lambda(x,y) \, \diff V(\sigma(x))(y) - \frac{\pi^n}{m^n} } \\
			\le {} &\frac{C_{23}}{m^{n+1}} +  \frac{\pi^n n(n+1)}{2 m^{n+1}} + \frac{\pi^n}{n!} \frac{1}{m^{n+1}} \notag\\
			= {} & \frac{C_{22}}{m^{n+1}},  \quad{\text{ where }}
					C_{22} := C_{23} +  \frac{\pi^n n(n+1)}{2} + \frac{\pi^n}{n!},\notag \qedhere
			 \end{align}
\end{proof}

\begin{prop} \label{EstimateII}
Let $C_{22}$ and $r_{11}$ be as in Lemma \ref{intEstOfExprTwo}.
Then,
	for all $m\in \mathbb{N}$, all $s\in H^0(X, L^m\otimes E)$ and
                    all $r$ satisfying $\sqrt{\frac{(n+1)\log m}{m}} < r < r_{11}$,
                    one has
        \begin{equation}
        \Modulus{\mathrm{II}} \le  \frac{C_{22}}{m^{n+1}} \norm{s}^2.
        \end{equation}
Here $\mathrm{II}$ is as in \eqref{IIIIII} (with $r$ there as above).
\end{prop}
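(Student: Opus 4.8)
The plan is to assemble the ingredients already prepared in Section \ref{Section 5}. The key reduction is \eqref{exprTwoEqSix}, obtained by applying the mean value property of Lemma \ref{meanValueProperty} to the quasi-diagonal function $\lambda(x,\cdot)$ and the quotient $\frac{s(y)\conj{s(x)}}{R(\yx)^m P(\yx)}$, which is holomorphic in $y$; this expresses $\Lambda(x)$ as $\ptNorm{s}{x}^2$ times a purely geometric integral, whose size is then controlled by Lemma \ref{intEstOfExprTwo}. So the proof of Proposition \ref{EstimateII} amounts to combining \eqref{exprTwoEqOne}, \eqref{exprTwoEqSix} and Lemma \ref{intEstOfExprTwo}, being careful only about the bookkeeping between $X$ and $X^\prime$.

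First I would start from \eqref{exprTwoEqOne}, writing $\mathrm{II} = \int_{x\in X^\prime}\Lambda(x)\Omega(x) - \frac{\pi^n}{m^n}\norm{s}^2$, and substitute \eqref{exprTwoEqSix}. Since $\int_{X\setminus X^\prime}\Omega = 0$ and $\ptNorm{s}{x}^2$ is continuous, hence bounded, on the compact manifold $X$, one has $\norm{s}^2 = \int_{X}\ptNorm{s}{x}^2\Omega(x) = \int_{X^\prime}\ptNorm{s}{x}^2\Omega(x)$, so the constant term can be absorbed into the integral over $X^\prime$:
\[
\mathrm{II} = \int_{x\in X^\prime}\ptNorm{s}{x}^2\left(\int_{y\in B(x,r)}\lambda(x,y)\,\diff V(\sigma(x))(y) - \frac{\pi^n}{m^n}\right)\Omega(x).
\]
Under the hypothesis $\sqrt{\frac{(n+1)\log m}{m}} < r < r_{11}$, Lemma \ref{intEstOfExprTwo} bounds the inner parenthesized factor in absolute value by $\frac{C_{22}}{m^{n+1}}$, uniformly for $x\in X^\prime$ and $m\in\mathbb{N}$. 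Taking absolute values inside the integral and using $\ptNorm{s}{x}^2\Omega(x)\ge 0$ then yields
\[
\Modulus{\mathrm{II}} \le \frac{C_{22}}{m^{n+1}}\int_{x\in X^\prime}\ptNorm{s}{x}^2\Omega(x) = \frac{C_{22}}{m^{n+1}}\norm{s}^2,
\]
which is the desired estimate.

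I do not expect a genuine obstacle at this stage: all of the real analytic work has already been carried out, in Lemma \ref{PtwiseEstOfExprTwo} (comparing the truncated kernel $\lambda$ with the model weight $(1-\rho(x,y)^2)^m$) and in Lemma \ref{intEstOfExprTwo}, whose proof in turn rests on the explicit Euclidean-ball integral of Lemma \ref{explicitSpherIntegral} together with the tail bound $(1-r^2)^m < m^{-(n+1)}$, valid precisely because $r > \sqrt{(n+1)\log m / m}$. The one point deserving attention when putting the pieces together is the passage between integration over $X$ and over $X^\prime$: the mean value identity \eqref{exprTwoEqSix} is only available where the continuous Bochner section $\sigma$ lives, namely on $X^\prime$, but this causes no loss since $X\setminus X^\prime$ is $\Omega$-null.
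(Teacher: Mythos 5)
Your proposal is correct and follows essentially the same route as the paper: combine \eqref{exprTwoEqOne} with \eqref{exprTwoEqSix}, absorb $\frac{\pi^n}{m^n}\norm{s}^2$ into the integral over $X^\prime$ (using that $X\setminus X^\prime$ is $\Omega$-null), and apply the uniform bound of Lemma \ref{intEstOfExprTwo} to the bracketed factor. No gaps.
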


\begin{proof}
From \eqref{exprTwoEqOne} and \eqref{exprTwoEqSix},
	one easily sees that
	\begin{equation}
		\mathrm{II} =
			\int_{x\in X^\prime}\ptNorm{s}{x}^2   \left[\int_{y\in B(x,\rad)}		
						 \lambda(x,y) \, \diff V(\sigma(x))(y) - \frac{\pi^n}{m^n}
					\right] \Omega(x).
	\end{equation}
Then, by Lemma \ref{intEstOfExprTwo}, one immediately has
	\begin{equation}
	\Modulus{\mathrm{II}} \le \frac{C_{22}}{m^{n+1}} \int_{x\in X^\prime} \ptNorm{s}{x}^2 \Omega(x)
				= \frac{C_{22}}{m^{n+1}} \norm{s}^2. \qedhere
		\end{equation}	
		\end{proof}

\bigskip

\section{Estimation of III and  Proof of Theorem \ref{main-theorem}}\label{Section 6}

\begin{lemm} \label{SchFnOffDiag}
There exists a constant $r_{12}$ with $0 < r_{12} < r_4$ such that,
		for each real number $r$ satisfying $0 < r < r_{12}$ and each $(x,y) \in (X\times X) \setminus W(r)$,
           one has $\Psi_R(x,y) \le 1 - \frac{r^2}{2}$.
\end{lemm}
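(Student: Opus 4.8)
The plan is to split $(X\times X)\setminus W(r)$ into the part lying in a fixed tubular neighborhood of the diagonal $D$, where the Taylor estimate \eqref{4.2} is available, and the remaining part, which is compact and bounded away from $D$, where the strict Cauchy--Schwarz inequality (SGCS-1) keeps $\Psi_R$ away from $1$.

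First I would record the basic facts about $\Psi_R$: from \eqref{diastasis}, \eqref{equivalent} and (SGCS-1) one has $0\le \Psi_R(x,y)\le 1$ on $X\times X$, with $\Psi_R(x,x)=1$ for all $x$, while $\Psi_R(x,y)<1$ whenever $x\neq y$ (apply (SGCS-1) to nonzero $v\in L^*_x$, $w\in L^*_y$ and use \eqref{equivalent} to see $\Psi_R(x,y)=|R(v,\overline w)|^2/(R(v,v)R(w,\overline w))$). Next, with $C_2$ as in \eqref{4.2}, I would fix an auxiliary radius $\widetilde r$ with $0<\widetilde r<r_4$ and $C_2\widetilde{r}^2\le \frac12$. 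Since $W(\widetilde r)$ is an open neighborhood of $D$ (cf. \eqref{Wr}), the set $K:=(X\times X)\setminus W(\widetilde r)$ is compact and disjoint from $D$, so $c:=\max_{K}\Psi_R$ is well-defined with $c<1$; then I would put $r_{12}:=\min\{\widetilde r,\,\sqrt{2(1-c)}\}>0$, which satisfies $r_{12}<r_4$ as required.

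Given $0<r<r_{12}$ and $(x,y)\in (X\times X)\setminus W(r)$, I would argue in two cases. If $(x,y)\notin W(\widetilde r)$, then $(x,y)\in K$, so $\Psi_R(x,y)\le c$, and since $r<r_{12}\le\sqrt{2(1-c)}$ forces $c<1-\frac{r^2}{2}$, the desired bound holds. If instead $(x,y)\in W(\widetilde r)\setminus W(r)$, then $\rho(x,y)$ is well-defined (as $\widetilde r<r_4<r_1$) with $r\le\rho(x,y)<\widetilde r$, and \eqref{4.2} yields $\Psi_R(x,y)\le 1-\rho(x,y)^2+C_2\rho(x,y)^4\le 1-\frac12\rho(x,y)^2\le 1-\frac{r^2}{2}$, using $C_2\rho(x,y)^2\le C_2\widetilde{r}^2\le\frac12$ and $\rho(x,y)\ge r$.

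The only point calling for care is the mild circularity in the choice of $r_{12}$: the constant $c$ bounding $\Psi_R$ on the removed region depends on the size of that region. This is resolved by fixing $\widetilde r$ first, using only \eqref{4.2}, and only then shrinking to $r_{12}$ to absorb $c$. I do not anticipate any further obstacle.
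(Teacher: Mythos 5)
Your proof is correct and follows essentially the same route as the paper's: split $(X\times X)\setminus W(r)$ into a near-diagonal annulus controlled by the Taylor estimate \eqref{4.2} and a compact complement where (SGCS-1) bounds $\Psi_R$ strictly below $1$, then take $r_{12}$ to be the minimum of the auxiliary radius and $\sqrt{2(1-c)}$. Your $\widetilde r$ and $c$ correspond exactly to the paper's $r_{13}$ and $\alpha$.
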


\begin{proof}
It follows readily from \eqref{4.2} that there exists a constant $r_{13} > 0$ with $r_{13} < r_4$ such that,
    for all $(x,y) \in W(r_{13})$, one has
\begin{equation} \label{schFnOffDiagEqOne}
\Psi_R(x,y) < 1 - \frac{1}{2}\rho(x,y)^2.
\end{equation}
Then by (SGCS-1) for $R$, one has
\begin{equation} \label{schFnOffDiagEqTwo}
\alpha := \sup_{(x,y) \in (X\times X) \setminus W(r_{13})} \Psi_R(x,y) < 1.
\end{equation}
Now one has $r_{12} := \min\{ \sqrt{2(1-\alpha)}, r_{13}\} > 0$, and let $r$ be a number satisfying $0 < r < r_{12}$.
Note that $(X\times X) \setminus W(r) = [(X\times X) \setminus W(R_{13})] \cup [W(r_{13})\setminus W(r)]$.
For any $(x,y) \in X\times X \setminus W (r_{13})$,
    it follows from \eqref{schFnOffDiagEqTwo} and the definition of $r_{12}$ that
    \begin{equation}
    \Psi_R(x,y) \le \alpha \le 1 - \frac{r_{12}^2}{2} < 1- \frac{r^2}{2}.
    \end{equation}
On the other hand, if $(x,y) \in W_{r_{13}} \setminus W(r)$ (so that $\rho(x,y) \ge r$), one also has, from \eqref{schFnOffDiagEqOne},
\begin{equation}
\Psi_R(x,y) \le 1 - \frac{\rho(x,y)^2}{2} \le 1 - \frac{r^2}{2} .
\end{equation}
Hence, for all $(x,y) \in (X\times X) \setminus W(r)$, one has $\Psi_R(x,y) \le 1 - \frac{r^2}{2}$.
\end{proof}

\begin{prop} \label{EstimateIII}
Let $r_{12}$ be as in Lemma \ref{SchFnOffDiag}.
Then there exists a constant $C_{23} > 0$ such that,
    for all $m\in \mathbb{N}$, all $s\in H^0(X, L^m \otimes E)$
            and all $r$ satisfying $\sqrt{\frac{2(n+1)\log m}{m}} < r < r_{12}$, one has
\begin{equation}
\Modulus{\mathrm{III}} \le   \frac{C_{23}}{m^{n+1}} \norm{s}^2.
\end{equation}
Here $\mathrm{III}$ is as in \eqref{IIIIII} (with $r$ there as above).
\end{prop}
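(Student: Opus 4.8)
In contrast to the estimates for $\mathrm{I}$ and $\mathrm{II}$, the estimate for $\mathrm{III}$ is comparatively soft: the plan is to reduce Proposition~\ref{EstimateIII} to a pointwise bound for the modulus of the integrand of $\mathrm{III}$ which is exponentially small in $m$ off the tubular neighborhood $W(r)$, and then to integrate that bound crudely. \textbf{Step 1 (a pointwise modulus identity).} Working in local holomorphic frames for $L^m\otimes E$ near $x$ and near $y$, and using the positivity of $R$ and $P$ together with the reality relations $R(\yx)=\conj{R(\xy)}$ and $P(\yx)=\conj{P(\xy)}$, a direct computation shows that the modulus of the integrand of $\mathrm{III}$ equals
\[
\Psi_R(x,y)^{m/2}\,\Psi_P(x,y)^{1/2}\,\ptNorm{s}{x}\,\ptNorm{s}{y},
\]
where $\ptNorm{s}{x}$ and $\ptNorm{s}{y}$ are the pointwise norms with respect to $R^mP$ as in \eqref{pointwisenorm}, and $\Psi_R$, $\Psi_P$ are the Cauchy--Schwarz functions of $R$ and $P$. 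The point requiring a little care is that the domain $(X'\times X)\setminus W'(r)$ of $\mathrm{III}$ genuinely meets the real-analytic subvarieties $Z_R$ and $Z_P$, so the convenient rewriting \eqref{RmPss} is not available there; but the displayed identity holds at \emph{every} point of $X\times X$ — on $Z_R\cup Z_P$ both sides vanish — precisely because $\Psi_R,\Psi_P\ge 0$ everywhere, and this is the form I would use.

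\textbf{Step 2 (off-diagonal decay and integration).} Since $\Psi_P$ is real-analytic on the compact manifold $X\times X$ and the denominator in \eqref{PPsiP} is nowhere zero (positivity of $P$), there is a constant $C_P>0$ with $\Psi_P\le C_P$ on $X\times X$. Since every point of the domain of $\mathrm{III}$ lies outside $W(r)$ and $0<r<r_{12}$, Lemma~\ref{SchFnOffDiag} gives the uniform bound $\Psi_R(x,y)\le 1-\tfrac{r^2}{2}$ there. Feeding these into Step~1, the modulus of the integrand of $\mathrm{III}$ is at most $C_P^{1/2}(1-\tfrac{r^2}{2})^{m/2}\,\ptNorm{s}{x}\,\ptNorm{s}{y}$ throughout the domain. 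Enlarging the domain to $X\times X$ (which only enlarges the bound) and then using the Cauchy--Schwarz inequality on $X$ together with $\int_X\ptNorm{s}{x}^2\,\Omega(x)=\norm{s}^2$, one obtains
\[
\Modulus{\mathrm{III}}\ \le\ C_P^{1/2}\,\vol(X)\,\Bigl(1-\tfrac{r^2}{2}\Bigr)^{m/2}\,\norm{s}^2 ,
\]
where $\vol(X):=\int_X\Omega$.

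\textbf{Step 3 (the choice of radius).} It remains only to convert $(1-\tfrac{r^2}{2})^{m/2}$ into a power of $1/m$, which is exactly where the lower bound on $r$ is used: from $r>\sqrt{\tfrac{2(n+1)\log m}{m}}$ and the elementary inequality $\log(1-t)\le -t$ for $0\le t<1$, a short computation gives $(1-\tfrac{r^2}{2})^{m/2}\le m^{-(n+1)}$, whence $\Modulus{\mathrm{III}}\le \dfrac{C_{23}}{m^{n+1}}\norm{s}^2$ with $C_{23}:=C_P^{1/2}\vol(X)$. I do not anticipate a real obstacle: $\mathrm{III}$ is the off-diagonal regime, in which the SGCS condition — via the uniform strict inequality of Lemma~\ref{SchFnOffDiag} — already forces honest exponential decay of the kernel. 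The only things genuinely to be checked are that the pointwise modulus identity of Step~1 is valid across $Z_R\cup Z_P$ (which is why one must not invoke \eqref{RmPss} in this region), and the elementary bookkeeping that a radius of order $\sqrt{(\log m)/m}$ suffices to overcome the factor $m^{n+1}$.
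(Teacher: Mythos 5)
Your strategy is the same as the paper's: bound the modulus of the integrand of $\mathrm{III}$ pointwise, use Lemma \ref{SchFnOffDiag} to get uniform decay of $\Psi_R$ off $W(r)$, bound $\Psi_P$ by a constant via compactness, and integrate crudely with Cauchy--Schwarz. Your Step 1 is correct, and in fact more careful than the paper: the modulus of the integrand is indeed $\Psi_R(x,y)^{m/2}\,\Psi_P(x,y)^{1/2}\,\ptNorm{s}{x}\,\ptNorm{s}{y}$ (consistent with the computation leading to \eqref{tempEstOfONE}), and your observation that this identity extends across $Z_R\cup Z_P$, where \eqref{RmPss} is unavailable, is the right way to justify it. By contrast the paper's \eqref{exprThreeEqThree} writes $\Psi_R^m\Psi_P$ in place of $\Psi_R^{m/2}\Psi_P^{1/2}$, which overstates the decay.

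However, your Step 3 does not close. With the correct exponent $m/2$, the hypothesis $r>\sqrt{\tfrac{2(n+1)\log m}{m}}$ yields only
\[
\Bigl(1-\tfrac{r^2}{2}\Bigr)^{m/2}\le e^{-mr^2/4}< e^{-\frac{(n+1)\log m}{2}}=m^{-\frac{n+1}{2}},
\]
not $m^{-(n+1)}$; to extract $m^{-(n+1)}$ from $(1-\tfrac{r^2}{2})^{m/2}$ one needs $r\ge 2\sqrt{\tfrac{(n+1)\log m}{m}}$. So the "short computation" you defer is off by a factor of $\sqrt{2}$ in the threshold, and your argument as written proves only $\Modulus{\mathrm{III}}\le C\,m^{-(n+1)/2}\norm{s}^2$ under the stated hypothesis. (The paper's arithmetic matches its stated threshold only because of the inflated powers in \eqref{exprThreeEqThree}; with the correct half-powers its proof has the same shortfall.) The repair is harmless for Theorem \ref{main-theorem}: enlarge the lower threshold to $2\sqrt{\tfrac{(n+1)\log m}{m}}$ here and in \eqref{star}; since $\sqrt{(\log m)/m}=o\bigl(m^{-\frac{n+2}{2n+5}}\bigr)$, an admissible $r(m)$ still exists for all large $m$. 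But as a proof of the proposition exactly as stated, Step 3 is a genuine gap.
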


\begin{proof}
From the definitions of $\Psi_R, \Psi_P$ in \eqref{diastasis}, \eqref{PPsiP} and similar to \eqref{tempEstOfONE},
     one easily sees that the integral $\mathrm{III}$ in \eqref{IIIIII} satisfies
\begin{equation} \label{exprThreeEqThree}
\Modulus{\mathrm{III}} \le \iint_{(X\times X) \setminus W(r)} \Psi_R(x,y)^m \Psi_P(x,y)
                                            \ptNorm{s}{x} \ptNorm{s}{y}
                                                \Omega(y) \Omega(x).
\end{equation}
By Lemma \ref{SchFnOffDiag}, if $0 < r < r_{12}$, then one has,
    for all $(x,y) \in (X\times X)\setminus W(r)$ and $m\in \mathbb{N}$,
    \begin{equation} \label{exprThreeEqFour}
    \Psi_R(x,y)^m \le \left(1 - \frac{r^2}{2}\right)^m \le e^{-\frac{mr^2}{2}},
    \end{equation}
    where the last inequality follows from the fact that $0 \le 1 - t \le e^{-t}$ for all $0\le t\le 1$.
    By the compactness of the manifold $X$, there exists a constant $C_{24} > 0$ such that
        $0\le \Psi_P(x,y) \le C_{24}$ for all $(x,y)\in X\times X$.
        Together with \eqref{exprThreeEqThree} and \eqref{exprThreeEqFour}, for $0 < r < r_{12}$, one has
        \begin{align}
            \Modulus{\mathrm{III}} &\le C_{24} e^{-\frac{mr^2}{2}} \iint_{(X\times X) \setminus W(r)} \ptNorm{s}{x} \ptNorm{s}{y}
                                                \Omega(y) \Omega(x) \\
                                                &\le C_{24} e^{-\frac{mr^2}{2}} \vol_{\Omega}(X) \norm{s}^2, \notag
        \end{align}
        where the last inequality follows from the Cauchy-Schwarz inequality and
        	$\vol_{\Omega}(X) := \int_X \Omega$.
        By taking the natural logarithm, one has
        \begin{equation}
        e^{-\frac{mr^2}{2}} \le \frac{1}{m^{n+1}} \quad \Longleftrightarrow \quad r \ge \sqrt{\frac{2(n+1)\log m}{m}},
        \end{equation}
        It follows that if $\sqrt{\frac{2(n+1)\log m}{m}} < r < r_{12}$,
        then one has
        \begin{equation}
        \Modulus{\mathrm{III}} \le \frac{C_{23}}{m^{n+1}} \norm{s}^2, \quad {\text{where }} C_{23} := C_{24} \vol_{\Omega}(X). \qedhere
        \end{equation}
\end{proof}

Now we complete the proof of Theorem \ref{main-theorem} as follows:

\medskip
\begin{proof}[Proof of Theorem \ref{main-theorem}]
Let $C_{16}, \, r_9,\, C_{22}, \, r_{11}, \, C_{23}, \, r_{12}$ be as in Proposition \ref{EstimateI}, Proposition \ref{EstimateII} and Proposition \ref{EstimateIII}.
Then it is easy to see that there exists $m_0\in\mathbb{N}$
    such that, for all $m\ge m_0$, one has
\begin{equation}
\sqrt{\frac{2(n+1)\log m}{m}} < \frac{r_9}{m^{\frac{n+2}{2n+5}}} < \min\{r_{11}, r_{12}\}.
\end{equation}
Now, for each $m\ge m_0$, we choose a number $r(m)$ satisfying
\begin{equation} \label{star}
\sqrt{\frac{2(n+1)\log m}{m}} \le r(m) \le \frac{r_9}{m^{\frac{n+2}{2n+5}}}
\end{equation}
(in particular, $r(m)$ may be taken to be one of the two bounds).
Then by \eqref{IntegrationScheme} and \eqref{IIIIII}, Proposition \ref{EstimateI}, Proposition \ref{EstimateII} and Proposition \ref{EstimateIII},
    one sees that for all $m\ge m_0$ and all $s\in H^0(X, L^m\otimes E)$, one has,
        with the number $r$ in $\mathrm{I}, \mathrm{II},\mathrm{III}$ in \eqref{IIIIII} given by $r(m)$ in \eqref{star},
\begin{align}
\Big|  {\bf K}_{R^mP,\Omega}(s,s)    -\dfrac{\pi^n}{m^n}\norm{s}^2\Big|
&   \leq |\mathrm{I}|+|\mathrm{II}|+|\mathrm{III}| \\
\nonumber
&
\leq \dfrac{C_{16} + C_{22} + C_{23}}{m^{n+1}}\norm{s}^2,
\end{align}
By the compactness of the manifold $X$, there exists a constant $C_{25} > 0$ such that,
    for each integer satisfying $1 \le m < m_0$,
\begin{equation}
\Modulus{  {\bf K}_{R^mP,\Omega}(s,s)    -\dfrac{\pi^n}{m^n}\norm{s}^2}
\leq \dfrac{C_{25}}{m^{n+1}}\norm{s}^2,
\end{equation}
    Thus by letting $C = \max \{ C_{16} + C_{22} + C_{23}, C_{25}\} > 0$, one sees that $\eqref{maininequality}$
        holds for all $m\in\mathbb{N}$, and the proof of Theorem \ref{main-theorem} is completed.
\end{proof}

\medskip
Finally we remark that the deduction of Corollary \ref{Corollary 1} from Theorem \ref{main-theorem} can be found in  \cite{CD99}, and thus it will be skipped here.

\end{document}